\newif\ifcolor
\newcommand{\LG}{\mathop{\mathrm{LG}}}
\newcommand{\norm}[1]{\left \Vert #1 \right \Vert}
\newcommand{\st}{\,:\,}
\newcommand{\C}{\mathbb{C}}
\newcommand{\R}{\mathbb{R}}
\newcommand{\T}{\mathbb{T}}
\newcommand{\Z}{\mathbb{Z}}
\newcommand{\abs}[1]{\left\lvert#1\right\rvert}
\def\ls[#1,#2]{\overline{\vphantom{\vbox to 1.2 ex{}} #1\, #2}}
\newcommand\plist[1]{\bigl[ #1 \bigr]}
\newcommand{\vecx}{{\boldsymbol{x}}}
\newcommand{\vecy}{{\boldsymbol{y}}}
\newcommand{\vecu}{{\boldsymbol{u}}}
\newcommand{\vecv}{{\boldsymbol{v}}}
\newcommand{\vecw}{{\boldsymbol{w}}}
\newcommand{\vecz}{{\boldsymbol{z}}}
\renewcommand{\Re}{\mathop{\mathrm{Re}}}
\DeclareMathOperator{\var}{var}
\DeclareMathOperator{\cvar}{\rm cvar}
\DeclareMathOperator{\vf}{vf}
 \newtheorem{theorem}{Theorem}[section]
 \newtheorem{corollary}[theorem]{Corollary}
 \newtheorem{lemma}[theorem]{Lemma}
 \theoremstyle{definition}
 \newtheorem{example}[theorem]{Example}
 \newtheorem{definition}[theorem]{Definition}
\numberwithin{equation}{section}
\newcommand\email[1]{\texttt{#1}}
\begin{document}

%% \pretitle{Editor's Choice}

\title{Isomorphisms of $AC(\sigma)$ spaces for linear graphs}
%\titlerunning{Isomorphisms of $AC(\sigma)$ spaces}

\date{}

%% Information for the first author.
\author{Shaymaa Al-shakarchi \and Ian Doust\footnote{
School of Mathematics and Statistics, University of New South Wales, UNSW Sydney 2052, Australia,
          \email{Email: i.doust@unsw.edu.au}
          }}

\maketitle

\begin{abstract}
  We show that among compact subsets of the plane which are drawings of linear graphs, two sets $\sigma$ and $\tau$ are homeomorphic if and only if the corresponding spaces of absolutely continuous functions (in the sense of Ashton and Doust) are isomorphic as Banach algebras. This gives an analogue for this class of sets to the well-known result of Gelfand and Kolmogorov for $C(\Omega)$ spaces.
  \hfill\break \smallskip
  
   \noindent       \textbf{Keywords:} $AC(\sigma)$ operators \and absolutely continuous functions \and functions of bounded variation \and topology of planar graphs 
          \hfill\break
          \textbf{Mathematics Subject Classification (2010):} Primary 46J10; Secondary 05C10, 46J45, 47B40, 26B30.
\end{abstract}

\section{Introduction}

The Banach algebras $AC(\sigma)$ and $BV(\sigma)$ were introduced in \cite{AD1} to provide a functional calculus model for operators on Banach spaces which is similar to, but more general than that for normal operators on a Hilbert space. Here $\sigma$ is a nonempty compact subset of the plane. The space $BV(\sigma)$ consists of those complex valued functions on $\sigma$ which are of bounded variation in the sense of \cite{AD1}, and $AC(\sigma)$ consists of the subalgebra of absolutely continuous functions. The class of $AC(\sigma)$ operators on a Banach space $X$ comprises those bounded operators which admit a bounded $AC(\sigma)$ functional calculus. This class includes all scalar-type spectral operators, and in the Hilbert space case, all normal operators (see \cite{AD2}). It is easy to check that if $T$ is an $AC(\sigma)$ operator then $T$ has spectrum $\sigma(T) \subseteq \sigma$.

If $\sigma$ is an interval $[a,b] \subseteq \R$ or the unit circle $\T$, the $AC(\sigma)$ operators are respectively the classes of well-bounded and trigonometrically well-bounded operators. At least on reflexive Banach spaces, there are well-developed structure theories for these classes which generalize the theories of self-adjoint and unitary operators (see \cite{Dow,BG}). Compact $AC(\sigma)$ operators admit a spectral representation similar to that for compact normal operators (see \cite{AD3}).

For more general compact sets $\sigma \subseteq \C$, a hurdle to advancing the theory of these operators has been our understanding of the structure of the corresponding function spaces.
The $AC(\sigma)$ spaces play a similar role in the theory of $AC(\sigma)$ operators to that played by the $C(\sigma)$ spaces in the theory of normal operators. It would be desirable therefore to have analogues of the powerful structure theorems that one has for these latter spaces.
% Given that the spaces $AC(\sigma)$ play a similar role in these theories to that played by the $C(\sigma)$ spaces in the theory of normal operators, it would be desirable to have analogues of the powerful structure theorems that one has for these spaces.
%
The classical theorems of Banach--Stone and Gelfand--Kolmogorov state that if $\Omega_1$ and $\Omega_2$ are compact Hausdorff topological spaces then the following three conditions are equivalent.
\begin{enumerate}
 \item $\Omega_1$ is homeomorphic to $\Omega_2$.
 \item $C(\Omega_1)$ and $C(\Omega_2)$ are linearly isometric as Banach spaces.
 \item $C(\Omega_1)$ and $C(\Omega_2)$ are isomorphic as algebras (or as $C^*$-algebras).
\end{enumerate}
The situation for $AC(\sigma)$ spaces is more complicated, even in the restricted setting of the plane. To fix some terminology, we shall say that Banach algebras $\mathcal{A}$ and $\mathcal{B}$ are isomorphic (as Banach algebras), and write $\mathcal{A} \simeq \mathcal{B}$, if there exists a linear and multiplicative bijection $\Phi: \mathcal{A} \to \mathcal{B}$ such that $\Phi$ and $\Phi^{-1}$ are continuous. If $\Phi$ can be chosen  to be isometric, then we shall write $\mathcal{A} \cong \mathcal{B}$.

It was shown in \cite{DL1} that if $AC(\sigma_1)$ is isomorphic to $AC(\sigma_2)$ as algebras, then the isomorphism must necessarily be bi-continuous, and so the spaces are isomorphic as Banach algebras. Furthermore, if this is the case then $\sigma_1$ must be homeomorphic to $\sigma_2$. Unfortunately the converse to this result is false. Doust and Leinert \cite{DL1} showed that the $AC(\sigma)$ spaces for the closed unit disk and a closed square in $\C$ are not isomorphic. In \cite{DAS} it was shown that one can construct an infinite set of compact $AC(\sigma)$ operators $\{T_i\}_{i=1}^\infty$ on $\ell^1$ so that the sets $\sigma_i = \sigma(T_i)$ are all homeomorphic, but such that the algebras $AC(\sigma_i)$ (which are isomorphic to the operator algebras $\{f(T_i) \st f \in AC(\sigma_i)\}$) are mututally non-isomorphic. (A simpler example again will be given in Section~\ref{fin-ex}.)

On the other hand, if one suitably restricts the class of compact sets $\sigma$, positive results can be obtained.

\begin{theorem}[{\cite[Theorem 6.3]{DL1}}]\label{DL-theorem}
Suppose that $\sigma$ and $\tau$ are polygonal regions in $\C$ (that is, regions with polygonal boundary which are homeomorphic to the closed disk), then $AC(\sigma)$ is isomorphic to $AC(\tau)$ as a Banach algebra.
\end{theorem}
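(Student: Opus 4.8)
The plan is to show that every polygonal region is carried onto a fixed triangle by a piecewise-affine homeomorphism, and that composition with such a homeomorphism induces a Banach algebra isomorphism of the associated $AC(\sigma)$ spaces. Fix a nondegenerate triangle $\Delta_0$. Since $\simeq$ is symmetric and transitive, it is enough to prove $AC(\sigma)\simeq AC(\Delta_0)$ for an arbitrary polygonal region $\sigma$. The starting point is affine invariance: if $\phi$ is an invertible affine map of $\C$ then $f\mapsto f\circ\phi$ is an isometric algebra isomorphism of $AC(\phi(\sigma))$ onto $AC(\sigma)$, because the Ashton--Doust variation is built from sums over finite lists of points and segments and is insensitive to affine bijections (which preserve collinearity and betweenness). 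In particular $AC(\Delta)\cong AC(\Delta_0)$ for every triangle $\Delta$.

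Next I would establish the composition principle. Call $h\colon\sigma\to\tau$ a \emph{piecewise-affine homeomorphism} if there is a triangulation $\sigma=T_1\cup\dots\cup T_k$ on each cell of which $h$ is affine; then $\tau=h(T_1)\cup\dots\cup h(T_k)$ is a triangulation of $\tau$ and each $h|_{T_i}\colon T_i\to h(T_i)$ is an affine bijection. Claim: for such an $h$, $C_h\colon f\mapsto f\circ h$ is a Banach algebra isomorphism of $AC(\tau)$ onto $AC(\sigma)$. Linearity, multiplicativity and unitality of $C_h$ are immediate, and $C_{h^{-1}}$ is a two-sided inverse, so everything reduces to a two-sided norm estimate. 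I would obtain this from two facts about $AC$ spaces. (i) \emph{Restriction is bounded}: if $S$ is a sub-triangle of a triangle $\Delta$, then $f\mapsto f|_S$ maps $AC(\Delta)$ boundedly into $AC(S)$; since $S=\Delta\cap H_1\cap H_2\cap H_3$ for closed half-planes $H_j$, this would follow by iterating the (expected) localisation property that compression to a closed half-plane is bounded on $AC$ of a planar compact set. (ii) \emph{A gluing estimate}: if $\sigma$ is triangulated by $T_1,\dots,T_k$ and $g\colon\sigma\to\C$ satisfies $g|_{T_i}\in AC(T_i)$ for each $i$, then $g\in AC(\sigma)$ with $\norm{g}_{AC(\sigma)}\le C_k\sum_i\norm{g|_{T_i}}_{AC(T_i)}$, the constant depending only on $k$. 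Granting (i) and (ii), for $f\in AC(\tau)$ one has $(f\circ h)|_{T_i}=(f|_{h(T_i)})\circ(h|_{T_i})$, which lies in $AC(T_i)$ by (i) together with affine invariance, with norm $\lesssim\norm{f}_{AC(\tau)}$; then (ii) gives $\norm{f\circ h}_{AC(\sigma)}\lesssim\norm{f}_{AC(\tau)}$. The same argument bounds $C_{h^{-1}}$, proving the claim.

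To conclude, I would invoke the standard fact from piecewise-linear topology that any compact polygonal $2$-disk is PL-homeomorphic to a triangle: triangulate $\sigma$, map its boundary polygon onto $\partial\Delta_0$ sending the boundary vertices to a cyclically ordered set of points on $\partial\Delta_0$ that includes the three corners, and extend over the interior (coning, after passing to a common refinement of the two triangulations, makes the extension affine on each triangle). Equivalently one may argue by induction on the number of vertices, at each stage peeling off an ``ear'' — a triangle meeting the rest of $\sigma$ in a single edge — and relating the two regions by a piecewise-affine homeomorphism. Applying the composition principle to this homeomorphism gives $AC(\sigma)\simeq AC(\Delta_0)$, hence $AC(\sigma)\simeq AC(\tau)$ for any two polygonal regions. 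The step I expect to be the genuine obstacle is the gluing estimate~(ii), and to a lesser extent the half-plane restriction bound in~(i): one must control how the two-dimensional variation of $g$ behaves when the domain is cut along, or reassembled from, edges of a triangulation, in particular bounding the contribution of test configurations that straddle several triangles. This is precisely where the finer structure of $BV(\sigma)$ — finiteness of the variation, its behaviour along line segments, and the combinatorics of admissible configurations — has to be used with care.
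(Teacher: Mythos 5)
Your architecture --- reduce everything to a fixed triangle via a piecewise-affine homeomorphism and show that composition with such a map induces an isomorphism of the $AC$ spaces --- is essentially the strategy of \cite{DL1}, which this paper only cites: there the homeomorphism is realised as a composition of finitely many \emph{locally piecewise affine} homeomorphisms of the whole plane, and the main work is showing that each one induces an $AC$ isomorphism. One of your two supporting facts is actually immediate: restriction to \emph{any} compact subset is contractive on $BV$ (the supremum in Definition~\ref{2d-var} is taken over a smaller collection of lists) and preserves $AC$ by \cite[Lemma~4.5]{AD1}, so the half-plane localisation in (i) is an unnecessary detour.

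The genuine gap is your gluing estimate (ii), which carries essentially the whole weight of the theorem and cannot be treated as a black box. Its $BV$ half is recoverable by the sublist/crossing-line argument of Theorem~\ref{LG-norm}: a segment of a list $S$ joining $T_i\setminus T_j$ to $T_j\setminus T_i$ is a crossing segment of the line through their shared edge, so there are at most a bounded multiple of $\vf(S)$ such segments, each contributing at most $2\norm{f}_\infty$. But the $AC$ half is of a different order of difficulty: knowing only that each $g|_{T_i}$ is a $BV(T_i)$-limit of polynomials, you must produce a \emph{single} element of $AC(\sigma)$ close to $g$ in $BV(\sigma)$, i.e.\ patch the local approximants $p_i$ across the shared edges without creating uncontrolled variation transverse to those edges (the $p_i$ need not agree there, and a partition of unity multiplies their discrepancy by cutoffs of large variation). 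Even the one-dimensional analogue in this paper (Theorem~\ref{AC-add-edge}) needs the subtraction of an absolutely continuous extension and the cutoff of Lemma~\ref{pw-linear-h} to work. The proof in \cite{DL1} is organised precisely to avoid this: instead of restricting and regluing, one bounds $\vf(h(S))$ directly in terms of $\vf(S)$ --- the image of a line segment under a $k$-piece affine homeomorphism is a polygonal path, so crossing numbers change by a factor controlled by $k$ --- which gives boundedness of $f\mapsto f\circ h$ on $BV$ in one stroke, and then one checks separately that composing a polynomial with a piecewise affine map yields an absolutely continuous function. Until you either prove the $AC$ part of (ii) or replace it by such a direct variation-factor comparison, the argument is incomplete.
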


A more general result holds for polygonal regions with finitely many polygonal `holes'.

Another natural class of sets to examine are those which are in some sense one dimensional. It is straightforward to adapt the proof of
Theorem~\ref{DL-theorem}  to show that if $\sigma$ and $\tau$ are simple closed polygonal curves (or polygonal chains), then   $AC(\sigma)$ is isomorphic to $AC(\tau)$ as a Banach algebra.  (Indeed it follows from \cite{AD4} and Theorem~\ref{LG-norm} below that these spaces are all isomorphic to $AC(\T)$.)

It is straightforward to construct examples of operators whose spectrum is composed of one dimensional pieces but which are topologically more complicated than a line segment or a loop. For example, if $H$ denotes the discrete Hilbert transform on $\ell^p(\Z)$ (with $1 < p < \infty$) then it follows from \cite{DS} that the operator $T\in B(\ell^p(\Z) \oplus\ell^p(\Z))$ defined by $T(x,y) = (Hx, iHy)$ has a cross-shaped spectrum $\sigma(T) = \sigma = [-\pi,\pi] \cup i[-\pi,\pi]$. Furthermore, $T$ is an $AC(\sigma)$ operator which is not a scalar-type spectral operator unless $p  =2$.

% The same is true of the operator $S(x,y) = ((i \pi I + H)x, (\pi I + H)y)$ which has spectrum $\tau = [0,2\pi] \cup i [0,2\pi]$ and which is an $AC(\tau)$ operator.

The aim of this paper is to prove a Gelfand--Kolmogorov type theorem which covers all compact subsets which are made up of line segments.

\begin{definition} The class $\LG$ of `linear graphs' consists of those compact connected subsets of the plane which are the union of a finite number of (compact) line segments.
\end{definition}

Our main result, which is proved in Section~\ref{AC-isom}, is the following.

\begin{theorem}\label{main-theorem} Suppose that $\sigma,\tau \in \LG$. Then $AC(\sigma) \simeq AC(\tau)$ if and only if $\sigma$ is homeomorphic to $\tau$.
\end{theorem}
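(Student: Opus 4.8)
The plan is to prove the two implications separately, with essentially all the substance in the ``if'' direction. The ``only if'' direction is not new: by \cite{DL1} every algebra isomorphism $AC(\sigma)\simeq AC(\tau)$ is automatically bicontinuous and forces $\sigma$ to be homeomorphic to $\tau$. So assume henceforth that $\sigma,\tau\in\LG$ are homeomorphic, and aim to construct a bicontinuous algebra isomorphism $\Phi\colon AC(\sigma)\to AC(\tau)$. The first step is combinatorial. Attach to $\sigma\in\LG$ its \emph{topological graph} $G(\sigma)$: its vertex set is the (finite) set of points of $\sigma$ having no neighbourhood in $\sigma$ homeomorphic to an open interval, and its edges are the closures of the arc-components of the complement of the vertex set; each such edge is a polygonal arc, and by construction no vertex has degree two. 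Setting aside the cases in which $\sigma$ has no branch point --- $\sigma$ a single point (where $AC(\sigma)=\C$), a single polygonal arc (where $AC(\sigma)\simeq AC[0,1]$), or a polygonal Jordan curve (where $AC(\sigma)\simeq AC(\T)$ by \cite{AD4} and an adaptation of the proof of Theorem~\ref{DL-theorem}) --- a classical fact about finite one-dimensional complexes gives that, for $\sigma,\tau\in\LG$, $\sigma$ is homeomorphic to $\tau$ if and only if $G(\sigma)$ and $G(\tau)$ are isomorphic as multigraphs (loops and parallel edges allowed); equivalently, $\sigma$ and $\tau$ have a common subdivision into matching polygonal arcs. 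Note that since the vertex set is defined topologically, any homeomorphism already carries $G(\sigma)$-edges to $G(\tau)$-edges --- but it need not be piecewise linear, which is what the next step repairs.

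The second step converts the combinatorial correspondence into an analytic one. Fix an isomorphism of $G(\sigma)$ onto $G(\tau)$, and for each edge $e$ of $G(\sigma)$ choose a piecewise-linear homeomorphism onto the corresponding edge of $G(\tau)$ matching the relevant vertices. These agree on shared vertices, so they glue to a piecewise-linear homeomorphism $h\colon\sigma\to\tau$, and then $\Phi(f)=f\circ h^{-1}$ is at least an algebra isomorphism between the underlying function spaces. The remaining point --- that $\Phi$ and $\Phi^{-1}$ are bounded --- I would obtain from Theorem~\ref{LG-norm}, which describes $\|\cdot\|_{AC(\sigma)}$ up to equivalence in terms of the variations of $f$ along the edges of $G(\sigma)$ (equivalently, realises $AC(\sigma)$ with an equivalent norm as the subalgebra of $\bigoplus_e AC(e)$ of tuples agreeing at the common vertices). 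This reduces the estimate to a one-dimensional and essentially known fact: a piecewise-linear homeomorphism between two polygonal arcs induces a bounded isomorphism of their $AC$ spaces. That in turn follows from the identification of $AC$ of any polygonal arc with $AC[0,1]$ --- again via the techniques behind Theorem~\ref{DL-theorem} and \cite{AD4} --- with constants depending only on the number and the interior angles of the corners of the arc, of which there are finitely many, all fixed once $\sigma$ and $\tau$ are. Assembling the edgewise bounds through the edge decomposition of the two norms then yields boundedness of $\Phi$, and symmetrically of $\Phi^{-1}$.

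The main obstacle is the genuinely two-dimensional nature of the Ashton--Doust variation, which is precisely why the edge decomposition underlying the second step is not a triviality: $\var(f,\sigma)$ need not equal $\sum_e\var(f|_e)$, because collinear edges and cancellation at a shared vertex can make it strictly smaller, and the reverse comparison also needs care. The real effort therefore goes into Theorem~\ref{LG-norm} and into checking that the constants comparing $\|f\|_{AC(\sigma)}$ with the edgewise data are uniform --- controllable purely in terms of the abstract multigraph together with a fixed geometric model of each homeomorphism type placed in general position (no three edges collinear, all self-crossings transverse) --- so that this model may serve as a common target for both $\sigma$ and $\tau$. Granting the localisation of the norm, the one-dimensional reparametrisation lemma, and the treatment of the no-branch-point cases above, the theorem follows.
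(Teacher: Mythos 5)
Your reduction of the problem is sound as far as it goes: the ``only if'' direction via \cite{DL1}, the combinatorial matching of edges, the construction of a piecewise-linear homeomorphism $h$ and the induced map $\Phi(f)=f\circ h^{-1}$, and the use of Theorem~\ref{LG-norm} to get boundedness of $\Phi$ on the spaces of functions of bounded variation all agree in substance with the paper. But there is a genuine gap at the decisive step. You claim that Theorem~\ref{LG-norm} ``realises $AC(\sigma)$ with an equivalent norm as the subalgebra of $\bigoplus_e AC(e)$ of tuples agreeing at the common vertices.'' Theorem~\ref{LG-norm} says nothing of the kind: it is purely a statement about the equivalence of $\norm{\cdot}_{LG(\sigma)}$ and $\norm{\cdot}_{BV(\sigma)}$ as norms on $BV(\sigma)$. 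It gives you that $\Phi$ is a bounded isomorphism of $BV(\sigma)$ onto $BV(\tau)$, but it does not tell you that a function which is absolutely continuous on each edge separately lies in $AC(\sigma)$, i.e.\ can be approximated in the $BV(\sigma)$ norm by restrictions of two-variable polynomials. That implication is exactly what fails to be automatic: the paper points out explicitly that there exist Banach algebra isomorphisms of $BV[0,1]$ onto itself which do not carry $AC[0,1]$ onto $AC[0,1]$, so norm-level control of $\Phi$ cannot by itself yield $\Phi(AC(\sigma))=AC(\tau)$.

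The missing ingredient is the gluing theorem (Theorem~\ref{AC-add-edge} in the paper): if $\sigma_1=\sigma\cup s$ with $s$ a line segment meeting $\sigma$ only at endpoints, then $f\in AC(\sigma_1)$ if and only if $f|\sigma\in AC(\sigma)$ and $f|s\in AC(s)$. The nontrivial direction requires an actual approximation argument --- one takes a polynomial close to $f$ on $\sigma$, observes it must be small near the attachment points, and corrects it on $s$ using an absolutely continuous near-indicator function of $s$ (Lemma~\ref{pw-linear-h}), with the error controlled edge-by-edge via Corollary~\ref{LG-decomp}. Applied inductively along an edge-by-edge decomposition, this is what legitimises the localisation of $AC$-membership to the edges that your argument takes for granted. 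Without it, your proof establishes only Theorem~\ref{BV-isom} (the $BV$ version), not the statement about $AC$. Everything else in your outline --- the multigraph formulation, the general-position geometric model, the treatment of the branch-point-free cases --- is either correct or repairable, but this one step is where all the analytic work of the paper lives, and it cannot be cited away.
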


To prove Theorem~\ref{main-theorem} one needs to show that if $\sigma,\tau \in \LG$ are homeomorphic then there exists at least one homeomorphism $\phi: \sigma \to \tau$ such that $f \in AC(\sigma)$ if and only if $\Phi(f) = f \circ \phi^{-1} \in AC(\tau)$. In the proof of Theorem~\ref{DL-theorem} the corresponding map $\phi$ could be chosen to be the restriction of a homeomorphism of the whole plane. This homeomorphism of the plane was constructed as a composition of a finite number of simple homeomorphisms called locally piecewise affine maps and the main step was to show that each of these maps generated an isomorphism between the appropriate spaces of absolutely continuous functions. It is not too hard to see that such a strategy is not possible in the current setting since it may be that none of the homeomorphisms between $\LG$ sets $\sigma$ and $\tau$ are the restrictions of homeomorphisms of the plane.
To get around this problem we shall use some concepts from graph theory which provide suitable tools for dealing with homeomorphic pairs of sets in $\LG$.

% The proof of Theorem~\ref{DL-theorem} depends on constructing a finite sequence $\{h_i\}_{i=1}^n$ of homeomorphisms of the whole plane which preserve the isomorphism class of the spaces of absolutely continuous functions, and whose composition maps $\sigma$ to $\tau$. It is not too hard to see that such a strategy is not possible in the current setting. Instead we shall use some concepts from graph theory which provide suitable tools for dealing with homeomorphic pairs of sets in $\LG$.

By definition, any $\sigma \in \LG$ can be written as a finite union of line segments, $\sigma = \bigcup_{j=1}^m s_j$. We shall always impose the condition that any two distinct line segments $s_i$ and $s_j$ can only intersect at the endpoints of the line segments. Such a representation will be called proper. (It is a simple matter to replace a decomposition of $\sigma$ which does not satisfy this condition with one which does.) The set $\sigma$ can then be thought of as providing a drawing of a graph whose vertex set consists of the endpoints of these line segments. It might be noted, and indeed it will be useful to us below, that any such set $\sigma$ will admit many proper representations and hence many different graphs.

In Section~\ref{BV-AC} we briefly recall the technical definitions of the spaces $BV(\sigma)$ and $AC(\sigma)$. Section~\ref{LG} includes the relevant background from the topological theory of planar graphs which will be needed. In the final section we shall give an example which indicates that Theorem~\ref{main-theorem} can not be extended to cover all sets which are homeomorphic to a set in $\LG$.

Throughout the paper we shall frequently need to consider the restriction of a function $f: \sigma_1 \to \C$ to a subset $\sigma \subseteq \sigma_1$. Unless there is a risk of confusion we shall not notationally distinguish the function from its restriction. Where appropriate we shall use $\norm{f}_{\infty,\sigma}$ to denote $\sup_{\vecz \in \sigma} | f(\vecz)|$. We shall denote the number of elements of a set $A$ by $|A|$.

\section{$BV(\sigma)$ and $AC(\sigma)$}\label{BV-AC}

Throughout this section $\sigma$ will denote a nonempty compact subset of the plane, which we shall identify as $\C$ or $\R^2$ as is convenient.
We shall denote the closed line segment joining two points $\vecx,\vecy \in \C$ by $\ls[x,y]$.

For the remainder of this section assume that $f$ is a complex-valued function on $\sigma$.
Our first task is to define the variation (in the sense of \cite{AD1}) of $f$ on $\sigma$.

Let $S = \plist{\vecx_0,\vecx_1,\dots,\vecx_n}$ be a finite ordered list of elements of $\sigma$, where, for the moment, we shall assume that $n \ge 1$. Let $\gamma_S$ denote the piecewise linear curve joining the points of $S$ in order.
Note that the elements of such a list do not need to be distinct (although one may assume that consecutive elements of the list are different).

The \textit{curve variation of $f$ on the list $S$} is defined to be
\begin{equation*}
    \cvar(f, S) =  \sum_{i=1}^{n} \abs{f(\vecx_{i}) - f(\vecx_{i-1})}.
\end{equation*}
Associated to the list $S$ is a variation factor $\vf(S)$ which is roughly the greatest number of times that $\gamma_S$ crosses any line in the plane.
To make this more precise we need the concept of a crossing segment.

\begin{definition}\label{crossing-definition}
Suppose that $\ell$ is a line in the plane. For $0 \le i < n$ we say that $\ls[\vecx_i,\vecx_{i+1}]$ is a \textit{crossing segment} of $S$ on $\ell$ if any one of the following holds:
\begin{enumerate}[(i)]
  \item $\vecx_i$ and $\vecx_{i+1}$ lie on (strictly) opposite sides of $\ell$.
  \item $i=0$ and $\vecx_i \in \ell$.
  \item $i > 0$, $\vecx_i \in \ell$ and $\vecx_{i-1} \not\in \ell$.
\end{enumerate}
Let $\vf(S,\ell)$ denote the number of crossing segments of $S$ on $\ell$.
%In this case we shall write $\ls[\vecx_i,\vecx_{i+1}] \in X(S,\ell)$.
\end{definition}

\begin{definition}\label{vf-definition}
The \textit{variation factor} of $S$ is defined to be
  \[  \vf(S) = \max \{ \vf(S,\ell) \st \text{$\ell$ is a line in $\C$}\}. \]
\end{definition}

Clearly $1 \le \vf(S) \le n$. For completeness, in the case that
$S = \plist{\vecx_0}$ we set $\cvar(f, \plist{\vecx_0}) = 0$ and let $\vf(\plist{\vecx_0}) = 1$.

\begin{definition}\label{2d-var}
\begin{enumerate}[(i)]
\item The \textit{two-dimensional variation} of $f$ is defined to be
\[
    \var(f, \sigma) = \sup_{S}
        \frac{ \cvar(f, S)}{\vf(S)},
\]
where the supremum is taken over all finite ordered lists $S$  of elements of $\sigma$.

\item The \textit{variation norm} of $f$ is
  $ \norm{f}_{BV(\sigma)} = \norm{f}_\infty + \var(f,\sigma)$.
\end{enumerate}
\end{definition}

The  set of functions of bounded variation on $\sigma$ is then
  \[ BV(\sigma) = \{ f: \sigma \to \C \st \norm{f}_{BV(\sigma)} < \infty\}. \]
This space is always a Banach algebra under pointwise operations \cite[Theorem 3.8]{AD1}. If $\sigma = [a,b] \subseteq \R$ then the above definition is equivalent to the more classical one (see \cite[Proposition~3.6]{AD1}).

Let $\mathbb{P}_2$ denote the space of complex polynomials in two real variables of the form $p(x,y) = \sum_{n,m} c_{nm} x^n y^m$, and let $\mathbb{P}_2(\sigma)$ denote the restrictions of elements of $\mathbb{P}_2$ to $\sigma$ (now considered as a subset of $\R^2$). By \cite[Corollary 3.14]{AD1},  $\mathbb{P}_2(\sigma)$ is always a subalgebra of $BV(\sigma)$.

\begin{definition}
The set of \textit{absolutely continuous} functions on $\sigma$, denoted $AC(\sigma)$, is the closure of $\mathbb{P}_2(\sigma)$ in $BV(\sigma)$.
\end{definition}

The set $AC(\sigma)$ forms a closed subalgebra of $BV(\sigma)$ and hence is also a Banach algebra. Again, if $\sigma = [a,b]$ it turns our that this definition agrees with the classical definition. In general, $AC(\sigma)$ contains all sufficiently nice functions. It is shown in \cite{DL2} that
 $C^1(\sigma) \subseteq AC(\sigma) \subseteq C(\sigma)$, where one interprets $C^1(\sigma)$ as consisting of all functions for which there is a $C^1$ extension to an open neighbourhood of $\sigma$. Further details can be found in \cite{AD1,DL2}.

It is easy to see that these definitions are stable under invertible affine transformations of the plane. That is, if  $h(z) = \alpha z + \beta$ (where $\alpha,\beta \in \C$, $\alpha \ne 0$), then $BV(h(\sigma)) \cong BV(\sigma)$ and $AC(h(\sigma)) \cong AC(\sigma)$.

\section{The $LG$ norm}\label{LG}

In general, the nature of the $BV(\sigma)$ norm makes it challenging to compute. In this section we shall show that for sets $\sigma \in \LG$ one can work with a simpler quantity which, while not submultiplicative, is equivalent to $\norm{\cdot}_{BV(\sigma)}$ as a Banach space norm.

Suppose that $\sigma \in \LG $ has a proper representation $\sigma = \bigcup_{j=1}^m s_j$. % with $s_j = \ls[\vecx_j,\vecx_j']$.
For each $j$, $BV(s_j) \cong BV[0,1]$ and $AC(s_j) \cong AC[0,1]$. In particular, one can calculate $\var(f,s_j)$ using essentially the classical definition of variation rather than using Definition~\ref{2d-var}.

\begin{definition} For $f : \sigma \to \C$, let
   $\displaystyle \norm{f}_{LG(\sigma)} = \norm{f}_\infty + \sum_{j=1}^m \var(f,s_j)$.
\end{definition}

At first glance it might appear that this definition depends on the choice of representation of $\sigma$ as a union of line segments. As we noted above, this representation is not unique. It will be a consequence of the results in the next section however that the value of $\norm{f}_{LG(\sigma)}$ is independent of the representation.

\begin{theorem}\label{LG-norm}
$\norm{\cdot}_{LG(\sigma)}$ is a norm and is equivalent to $\norm{\cdot}_{BV(\sigma)}$.
\end{theorem}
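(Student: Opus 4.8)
The plan is to establish the equivalence $\norm{\cdot}_{LG(\sigma)} \asymp \norm{\cdot}_{BV(\sigma)}$ by proving two inequalities, the easy direction being $\norm{f}_{LG(\sigma)} \le C_1 \norm{f}_{BV(\sigma)}$ and the substantive direction being $\norm{f}_{BV(\sigma)} \le C_2 \norm{f}_{LG(\sigma)}$, where $C_1, C_2$ depend only on $\sigma$ (in fact only on the number $m$ of segments in a fixed proper representation). Once this equivalence is in hand, the fact that $\norm{\cdot}_{LG(\sigma)}$ is a genuine norm (positivity, homogeneity, triangle inequality) is immediate from the corresponding properties of $\norm{\cdot}_\infty$ and of each $\var(\cdot, s_j)$, and finiteness on $AC(\sigma)$-type functions is not needed separately since it follows from the comparison with the $BV(\sigma)$ norm; the only point requiring care is that $\norm{f}_{LG(\sigma)} = 0$ forces $f \equiv 0$, which is clear since the sup norm term already does that.

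For the easy direction, I would fix a proper representation $\sigma = \bigcup_{j=1}^m s_j$ and observe that for each segment $s_j$ and any finite ordered list $S$ of points lying on $s_j$, the piecewise linear curve $\gamma_S$ lies on the line containing $s_j$, so one can choose a line $\ell$ crossed by $\gamma_S$ roughly $n$ times; more precisely, on a single segment the classical variation and the two-dimensional variation of Definition~\ref{2d-var} coincide (as already noted in the excerpt, $BV(s_j) \cong BV[0,1]$ with matching variation), and moreover any list of points on $s_j$ is also a list of points of $\sigma$, so $\var(f, s_j) \le \var(f, \sigma)$. Summing over $j$ gives $\sum_j \var(f,s_j) \le m \, \var(f,\sigma)$, hence $\norm{f}_{LG(\sigma)} \le m \norm{f}_{BV(\sigma)}$.

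The hard direction is to bound $\var(f,\sigma)$ by a constant times $\sum_j \var(f, s_j) + \norm{f}_\infty$. Here one starts with an arbitrary finite ordered list $S = \plist{\vecx_0, \dots, \vecx_n}$ of points of $\sigma$ and must estimate $\cvar(f,S)/\vf(S)$. The natural strategy is to refine $S$: insert into the list, between consecutive points $\vecx_{i-1}$ and $\vecx_i$, the (finitely many, at most $m$) points where the segment $\ls[\vecx_{i-1},\vecx_i]$ meets vertices of the representation, thereby decomposing the chord $\ls[\vecx_{i-1},\vecx_i]$ into subchords each of which, after further subdivision, can be compared with travel along the edges $s_j$. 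One then wants to say that $\cvar(f, S)$ is controlled by a sum of curve variations of $f$ along sublists contained in individual segments $s_j$, each of which is at most $\var(f, s_j)$ times an appropriate variation factor — the subtlety being that a chord of $\sigma$ between two points on different segments need not lie in $\sigma$ at all, so one cannot simply replace the chord by a path in $\sigma$ without invoking the triangle inequality in $\C$ and absorbing terms of the form $|f(\vecx_i) - f(\text{vertex})|$, which are themselves bounded using variation along a segment. The delicate bookkeeping is to show that the number of sublist pieces produced, and the variation factors attached to them, are together controlled by $\vf(S)$ up to a constant depending only on $m$ — i.e. that passing to this refinement increases the ``denominator'' $\vf$ by at most a bounded factor while the ``numerator'' $\cvar$ only goes up in a controlled way. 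I expect this combinatorial estimate — relating $\vf(S)$ for a list straddling many segments to the number of segment-localized pieces and their individual variation factors — to be the main obstacle, and it is presumably where the bulk of the work in the next section (whose results the excerpt says will also prove representation-independence of $\norm{\cdot}_{LG(\sigma)}$) is directed.
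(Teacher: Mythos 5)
Your easy direction and your remarks on the norm axioms match the paper. The problem is the substantive inequality $\norm{f}_{BV(\sigma)} \le C\,\norm{f}_{LG(\sigma)}$: you describe a refinement strategy and then explicitly defer ``the main obstacle'' --- the combinatorial comparison between $\vf(S)$ and the variation factors of the segment-localized sublists --- to work you have not carried out. That estimate \emph{is} the content of the theorem, so as written this is a gap, not a proof. Worse, the refinement you propose (inserting into each chord $\ls[\vecx_{i-1},\vecx_i]$ the points where it meets vertices of the representation) does not get off the ground: such a chord is a straight segment of the plane which in general meets $\sigma$ only at its two endpoints and passes through no vertex at all, so there is nothing to insert; and replacing the chord by a path inside $\sigma$ changes both $\cvar$ and $\vf$ in ways you have given no means of controlling. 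You also misplace where the work happens: the graph-theoretic material of the following section is used only to show that $\norm{\cdot}_{LG(\sigma)}$ is independent of the chosen representation, not to prove this inequality.

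The paper's route is different and avoids the chord-replacement issue entirely: it inducts on the number $m$ of segments. With $\tau_1=\tau\cup s$, where $s=\ls[\vecx,\vecy]$ meets $\tau$ only at its endpoints, the chords of an arbitrary list $S$ are split into those with both endpoints in $\tau$ (indices $I_1$), both in $s$ (indices $I_2$), and the remainder ($I_3$). The $I_1$ and $I_2$ contributions are handled by passing to sublists $S_1,S_2$ and invoking the monotonicity $\vf(S_i)\le\vf(S)$ of \cite[Proposition~3.5]{DL2}, which bounds them by $\var(f,\tau)$ and $\var(f,s)$. The key step replacing your missing combinatorial estimate is geometric: a quadrilateral $Q$ with vertices at $\vecx$ and $\vecy$ separates $s\setminus\{\vecx,\vecy\}$ from $\tau\setminus\{\vecx,\vecy\}$, every chord indexed by $I_3$ is a crossing segment for one of the four lines bounding $Q$, hence $\vf(S)\ge\tfrac14|I_3|$, and the whole $I_3$ sum is absorbed into $8\norm{f}_{\infty}$. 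This gives $\var(f,\tau_1)\le\var(f,\tau)+\var(f,s)+8\norm{f}_{\infty}$ and closes the induction with a constant linear in $m$. If you want to salvage your approach, you would need to prove the deferred estimate yourself; the quadrilateral separation argument is the tool that makes it unnecessary.
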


\begin{proof} To check that $\norm{\cdot}_{LG(\sigma)}$ is a norm is straightforward.
Note that since $\var(f,s_j) \le \var(f,\sigma)$ for each $j$, we clearly have that
  \[ \norm{f}_{LG(\sigma)} \le \norm{f}_\infty + m \var(f,\sigma) \le m \norm{f}_{BV(\sigma)}. \]
The reverse inequality is more difficult.

We proceed by induction on $m$. If $m = 1$ then the two norms are identical, so there is nothing to prove. Suppose then that there exists $C_m$ such that given any $\tau = \bigcup_{j=1}^m s_j \in \LG$ which is the union of $m$ line segments then
$\norm{f}_{BV(\tau)} \le C_m \norm{f}_{LG(\tau)}$. Suppose  that $\tau$ is such a set, and that $s = \ls[\vecx,\vecy]$ is a line segment such that $\tau \cap s$ is either $\{\vecx\}$ or else $\{\vecx,\vecy\}$. Let $\tau_1 = \tau \cup s$.

Suppose that $f: \tau_1 \to \C$ and that $S = [\vecz_0,\vecz_1,\dots,\vecz_n]$ is a list
of points in $\tau_1$. Our aim is to bound $\cvar(f,S)/\vf(S)$.

For $j = 1,\dots,n$, let $\ell_j = \ls[\vecz_{j-1},\vecz_j]$. Define subsets $I_1,I_2,I_3 \subseteq \{1,2,\dots,n\}$ by (see Figure~\ref{ex1})
  \begin{align*}
    I_1 &= \{ j \st \vecz_j,\vecz_{j-1} \in \tau\}, \\
    I_2 &= \{ j \st \vecz_j,\vecz_{j-1} \in s\}, \\
    I_3 &= \{1,2,\dots,n\} \setminus (I_1 \cup I_2).
  \end{align*}
Note that if $j \in I_3$ then neither endpoint of $\ell_j$ is equal to either $\vecx$ or $\vecy$.

\begin{figure}[ht]
\begin{center}

%  First the color version
%
\ifcolor
\begin{tikzpicture}[scale=0.75]

 \draw[thick,black] (0,0) -- (2,-2) -- (5,1) -- (3.5,3) -- (0,3) -- (0,0) -- (-2,0.5) -- (-1.5,-2) -- (0,0);
 \draw[thick,black] (3.5,3) -- (2,2);
 \draw[ultra thick,red]  (0,0) -- (2,2);
 
\draw[black] (0,0) node[circle, draw, fill=black!50,inner sep=0pt, minimum width=4pt] {};
 \draw[black] (0,-0.1) node[below] {$\vecx$};
 \draw[black] (2,2) node[circle, draw, fill=black!50,inner sep=0pt, minimum width=4pt] {};
 \draw[black] (1.9,2) node[above] {$\vecy$};

 \draw[black] (-2,-1) node {$\tau$};
 \draw[red] (2.3,1.75) node[below] {$s = \ls[\vecx,\vecy]$};

\end{tikzpicture}
\hspace{2cm}
\begin{tikzpicture}[scale=0.75]

 \draw[thick,black] (0,0) -- (2,-2) -- (5,1) -- (3.5,3) -- (0,3) -- (0,0) -- (-2,0.5) -- (-1.5,-2) -- (0,0);
 \draw[thick,black] (3.5,3) -- (2,2);
 \draw[ultra thick,red ]  (0,0) -- (2,2);

 \draw[thick,dashed,blue] (0.5,-0.5) -- (0.5,0.5) -- (1,1) -- (0,2.5) -- (1,3) -- (4.25,2) -- (2,2) -- (1.4,1.4) -- (3.5,-0.5) -- (0,1.3) -- (-1,0.25);

 \draw[black] (0.5,-0.5) node[circle, draw, fill=black!50,inner sep=0pt, minimum width=4pt] {};
 \draw[black] (0.5,-0.6) node[below] {$\vecz_0$};
  \draw[black] (0.5,0.5) node[circle, draw, fill=black!50,inner sep=0pt, minimum width=4pt] {};
 \draw[black] (0.6,0.6) node[left] {$\vecz_1$};
  \draw[black] (1,1) node[circle, draw, fill=black!50,inner sep=0pt, minimum width=4pt] {};
 \draw[black] (1,0.9) node[right] {$\vecz_2$};
 \draw[black] (0,2.5) node[circle, draw, fill=black!50,inner sep=0pt, minimum width=4pt] {};
 \draw[black] (0,2.5) node[left] {$\vecz_3$};
 \draw[black] (1,3) node[circle, draw, fill=black!50,inner sep=0pt, minimum width=4pt] {};
 \draw[black] (1,3) node[above] {$\vecz_4$};
 \draw[black] (4.25,2) node[circle, draw, fill=black!50,inner sep=0pt, minimum width=4pt] {};
 \draw[black] (4.4,2) node[above] {$\vecz_5$};
 \draw[black] (2,2) node[circle, draw, fill=black!50,inner sep=0pt, minimum width=4pt] {};
 \draw[black] (1.65,2) node[above] {$\vecz_6 = \vecy$};
 \draw[black] (1.4,1.4) node[circle, draw, fill=black!50,inner sep=0pt, minimum width=4pt] {};
 \draw[black] (1.45,1.5) node[left] {$\vecz_7$};
 \draw[black] (3.5,-0.5) node[circle, draw, fill=black!50,inner sep=0pt, minimum width=4pt] {};
 \draw[black] (3.65,-0.5) node[below] {$\vecz_8$};
 \draw[black] (0,1.3) node[circle, draw, fill=black!50,inner sep=0pt, minimum width=4pt] {};
 \draw[black] (0,1.3) node[left] {$\vecz_9$};
 \draw[black] (-1,0.25) node[circle, draw, fill=black!50,inner sep=0pt, minimum width=4pt] {};
 \draw[black] (-1.1,0.2) node[below] {$\vecz_{10}$};

 \draw[black] (0,0) node[circle, draw, fill=black!50,inner sep=0pt, minimum width=4pt] {};
 \draw[black] (0,-0.1) node[below] {$\vecx$};

 \draw[black] (-2,-1) node {$\tau$};

\end{tikzpicture}

\else
%
% Now the black and white version
%

\begin{tikzpicture}[scale=0.75]

 \draw[thick,black] (0,0) -- (2,-2) -- (5,1) -- (3.5,3) -- (0,3) -- (0,0) -- (-2,0.5) -- (-1.5,-2) -- (0,0);
 \draw[thick,black] (3.5,3) -- (2,2);
 \draw[ultra thick]  (0,0) -- (2,2);
 %\draw[red,dashed]  (0,0) -- (1,2) --(2,2) -- (2,1) -- (0,0);

 \draw[black] (0,0) node[circle, draw, fill=black!50,inner sep=0pt, minimum width=4pt] {};
 \draw[black] (0,-0.1) node[below] {$\vecx$};
 \draw[black] (2,2) node[circle, draw, fill=black!50,inner sep=0pt, minimum width=4pt] {};
 \draw[black] (1.9,2) node[above] {$\vecy$};

 \draw[black] (-2,-1) node {$\tau$};
 \draw[black] (2.3,1.75) node[below] {$s = \ls[\vecx,\vecy]$};

\end{tikzpicture}
\hspace{2cm}
\begin{tikzpicture}[scale=0.75]

 \draw[thick,black] (0,0) -- (2,-2) -- (5,1) -- (3.5,3) -- (0,3) -- (0,0) -- (-2,0.5) -- (-1.5,-2) -- (0,0);
 \draw[thick,black] (3.5,3) -- (2,2);
 \draw[ultra thick]  (0,0) -- (2,2);
 %\draw[red,dashed]  (0,0) -- (1,2) --(2,2) -- (2,1) -- (0,0);
 \draw[dashed] (0.5,-0.5) -- (0.5,0.5) -- (1,1) -- (0,2.5) -- (1,3) -- (4.25,2) -- (2,2) -- (1.4,1.4) -- (3.5,-0.5) -- (0,1.3) -- (-1,0.25);

 \draw[black] (0.5,-0.5) node[circle, draw, fill=black!50,inner sep=0pt, minimum width=4pt] {};
 \draw[black] (0.5,-0.6) node[below] {$\vecz_0$};
  \draw[black] (0.5,0.5) node[circle, draw, fill=black!50,inner sep=0pt, minimum width=4pt] {};
 \draw[black] (0.6,0.6) node[left] {$\vecz_1$};
  \draw[black] (1,1) node[circle, draw, fill=black!50,inner sep=0pt, minimum width=4pt] {};
 \draw[black] (1,0.9) node[right] {$\vecz_2$};
 \draw[black] (0,2.5) node[circle, draw, fill=black!50,inner sep=0pt, minimum width=4pt] {};
 \draw[black] (0,2.5) node[left] {$\vecz_3$};
 \draw[black] (1,3) node[circle, draw, fill=black!50,inner sep=0pt, minimum width=4pt] {};
 \draw[black] (1,3) node[above] {$\vecz_4$};
 \draw[black] (4.25,2) node[circle, draw, fill=black!50,inner sep=0pt, minimum width=4pt] {};
 \draw[black] (4.4,2) node[above] {$\vecz_5$};
 \draw[black] (2,2) node[circle, draw, fill=black!50,inner sep=0pt, minimum width=4pt] {};
 \draw[black] (1.65,2) node[above] {$\vecz_6 = \vecy$};
 \draw[black] (1.4,1.4) node[circle, draw, fill=black!50,inner sep=0pt, minimum width=4pt] {};
 \draw[black] (1.45,1.5) node[left] {$\vecz_7$};
 \draw[black] (3.5,-0.5) node[circle, draw, fill=black!50,inner sep=0pt, minimum width=4pt] {};
 \draw[black] (3.65,-0.5) node[below] {$\vecz_8$};
 \draw[black] (0,1.3) node[circle, draw, fill=black!50,inner sep=0pt, minimum width=4pt] {};
 \draw[black] (0,1.3) node[left] {$\vecz_9$};
 \draw[black] (-1,0.25) node[circle, draw, fill=black!50,inner sep=0pt, minimum width=4pt] {};
 \draw[black] (-1.1,0.2) node[below] {$\vecz_{10}$};

 \draw[black] (0,0) node[circle, draw, fill=black!50,inner sep=0pt, minimum width=4pt] {};
 \draw[black] (0,-0.1) node[below] {$\vecx$};

 \draw[black] (-2,-1) node {$\tau$};

\end{tikzpicture}

\fi

\caption{An example to illustrate the steps in the proof of Theorem~\ref{LG-norm}. Here $S = [\vecz_0,\dots,\vecz_{10}]$ is a list of points in $\tau_1 = \tau \cup s$.
In this example, $I_1 = \{4,5,6,9,10\}$, $I_2 = \{2,7\}$ and $I_3 = \{1,3,8\}$. The sublists are $S_1 = [\vecz_3,\vecz_4,\vecx_5,\vecx_6,\vecz_8,\vecz_9,\vecx_{10}]$ and $S_2 = [\vecx_1,\vecx_2,\vecx_6,\vecx_7]$.}\label{ex1}

\end{center}
\end{figure}
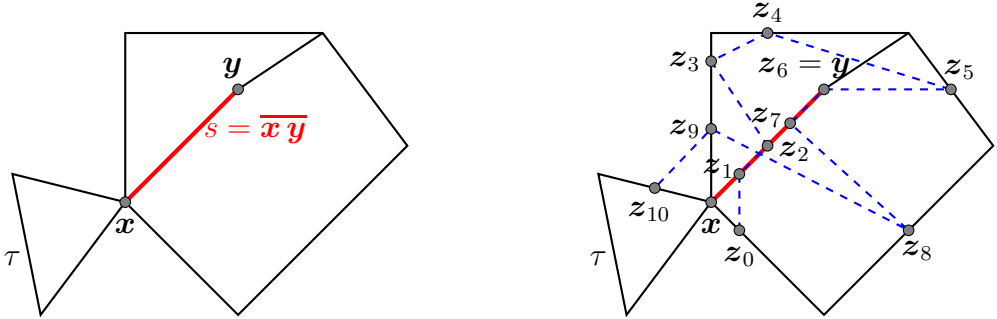

%%%%%%%%%%%%%%%%%%%%%%%%%%%%%%%%%%%%%%%%%%%%%%%%%%%%%%%%%%%%%%%%%%%%%%%%%%%%%%%%%%%%%%%%%%%%%%%%%%%%%%%%%%%%%%%%%%%%%%%%%%%%%%%%%%%
%
%  Second major figure
%
%%%%%%%%%%%%%%%%%%%%%%%%%%%%%%%%%%%%%%%%%%%%%%%%%%%%%%%%%%%%%%%%%%%%%%%%%%%%%%%%%%%%%%%%%%%%%%%%%%%%%%%%%%%%%%%%%%%%%%%%%%%%%%%%%%%%

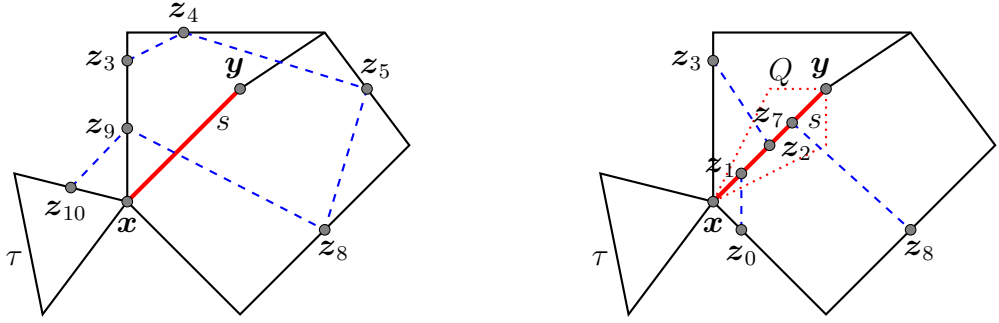
\begin{figure}
\begin{center}

% 
% First the color version
%
\ifcolor

\begin{tikzpicture}[scale=0.75]

 \draw[thick,black] (0,0) -- (2,-2) -- (5,1) -- (3.5,3) -- (0,3) -- (0,0) -- (-2,0.5) -- (-1.5,-2) -- (0,0);
 \draw[thick,black] (3.5,3) -- (2,2);
 \draw[ultra thick,red]  (0,0) -- (2,2);

 \draw[thick,dashed,blue] (0,2.5) -- (1,3) -- (4.25,2) -- (3.5,-0.5) -- (0,1.3) -- (-1,0.25);

 \draw[black] (0,2.5) node[circle, draw, fill=black!50,inner sep=0pt, minimum width=4pt] {};
 \draw[black] (0,2.5) node[left] {$\vecz_3$};
 \draw[black] (1,3) node[circle, draw, fill=black!50,inner sep=0pt, minimum width=4pt] {};
 \draw[black] (1,3) node[above] {$\vecz_4$};
 \draw[black] (4.25,2) node[circle, draw, fill=black!50,inner sep=0pt, minimum width=4pt] {};
 \draw[black] (4.4,2) node[above] {$\vecz_5$};
 \draw[black] (3.5,-0.5) node[circle, draw, fill=black!50,inner sep=0pt, minimum width=4pt] {};
 \draw[black] (3.65,-0.5) node[below] {$\vecz_8$};
 \draw[black] (0,1.3) node[circle, draw, fill=black!50,inner sep=0pt, minimum width=4pt] {};
 \draw[black] (0,1.3) node[left] {$\vecz_9$};
 \draw[black] (-1,0.25) node[circle, draw, fill=black!50,inner sep=0pt, minimum width=4pt] {};
 \draw[black] (-1.1,0.2) node[below] {$\vecz_{10}$};

 \draw[black] (0,0) node[circle, draw, fill=black!50,inner sep=0pt, minimum width=4pt] {};
 \draw[black] (0,-0.1) node[below] {$\vecx$};
 \draw[black] (2,2) node[circle, draw, fill=black!50,inner sep=0pt, minimum width=4pt] {};
 \draw[black] (1.9,2) node[above] {$\vecy$};

 \draw[black] (-2,-1) node {$\tau$};
 \draw[black] (1.7,1.7) node[below] {$s$};

\end{tikzpicture}
\hspace{2cm}
\begin{tikzpicture}[scale=0.75]

 \draw[thick,black] (0,0) -- (2,-2) -- (5,1) -- (3.5,3) -- (0,3) -- (0,0) -- (-2,0.5) -- (-1.5,-2) -- (0,0);
 \draw[thick,black] (3.5,3) -- (2,2);
 \draw[ultra thick,red]  (0,0) -- (2,2);
 \draw[thick,dotted,red]  (0,0) -- (2,1) -- (2,2) -- (1,2) -- (0,0);
 \draw[thick,dashed,blue] (0.5,-0.5) -- (0.5,0.5);
 \draw[thick,dashed,blue] (1,1) -- (0,2.5);
 \draw[thick,dashed,blue] (1.4,1.4) -- (3.5,-0.5);

 \draw[black] (0.5,-0.5) node[circle, draw, fill=black!50,inner sep=0pt, minimum width=4pt] {};
 \draw[black] (0.5,-0.6) node[below] {$\vecz_0$};
  \draw[black] (0.5,0.5) node[circle, draw, fill=black!50,inner sep=0pt, minimum width=4pt] {};
 \draw[black] (0.6,0.6) node[left] {$\vecz_1$};
  \draw[black] (1,1) node[circle, draw, fill=black!50,inner sep=0pt, minimum width=4pt] {};
 \draw[black] (1,0.9) node[right] {$\vecz_2$};
 \draw[black] (0,2.5) node[circle, draw, fill=black!50,inner sep=0pt, minimum width=4pt] {};
 \draw[black] (0,2.5) node[left] {$\vecz_3$};
  \draw[black] (1.4,1.4) node[circle, draw, fill=black!50,inner sep=0pt, minimum width=4pt] {};
 \draw[black] (1.45,1.5) node[left] {$\vecz_7$};
 \draw[black] (3.5,-0.5) node[circle, draw, fill=black!50,inner sep=0pt, minimum width=4pt] {};
 \draw[black] (3.65,-0.5) node[below] {$\vecz_8$};

 \draw[black] (0,0) node[circle, draw, fill=black!50,inner sep=0pt, minimum width=4pt] {};
 \draw[black] (0,-0.1) node[below] {$\vecx$};
 \draw[black] (2,2) node[circle, draw, fill=black!50,inner sep=0pt, minimum width=4pt] {};
 \draw[black] (1.9,2) node[above] {$\vecy$};

 \draw[black] (-2,-1) node {$\tau$};
 \draw[black] (1.8,1.75) node[below] {$s$};
 \draw[black] (1.2,2.3) node {$Q$};
 \end{tikzpicture}

%
% Now the black and white version
%
\else

\begin{tikzpicture}[scale=0.75]

 \draw[thick,black] (0,0) -- (2,-2) -- (5,1) -- (3.5,3) -- (0,3) -- (0,0) -- (-2,0.5) -- (-1.5,-2) -- (0,0);
 \draw[thick,black] (3.5,3) -- (2,2);
 \draw[ultra thick]  (0,0) -- (2,2);
 %\draw[red,dashed]  (0,0) -- (2,1) -- (2,2) -- (1,2) -- (0,0);
 \draw[dashed] (0,2.5) -- (1,3) -- (4.25,2) -- (3.5,-0.5) -- (0,1.3) -- (-1,0.25);

 \draw[black] (0,2.5) node[circle, draw, fill=black!50,inner sep=0pt, minimum width=4pt] {};
 \draw[black] (0,2.5) node[left] {$\vecz_3$};
 \draw[black] (1,3) node[circle, draw, fill=black!50,inner sep=0pt, minimum width=4pt] {};
 \draw[black] (1,3) node[above] {$\vecz_4$};
 \draw[black] (4.25,2) node[circle, draw, fill=black!50,inner sep=0pt, minimum width=4pt] {};
 \draw[black] (4.4,2) node[above] {$\vecz_5$};
 \draw[black] (3.5,-0.5) node[circle, draw, fill=black!50,inner sep=0pt, minimum width=4pt] {};
 \draw[black] (3.65,-0.5) node[below] {$\vecz_8$};
 \draw[black] (0,1.3) node[circle, draw, fill=black!50,inner sep=0pt, minimum width=4pt] {};
 \draw[black] (0,1.3) node[left] {$\vecz_9$};
 \draw[black] (-1,0.25) node[circle, draw, fill=black!50,inner sep=0pt, minimum width=4pt] {};
 \draw[black] (-1.1,0.2) node[below] {$\vecz_{10}$};

 \draw[black] (0,0) node[circle, draw, fill=black!50,inner sep=0pt, minimum width=4pt] {};
 \draw[black] (0,-0.1) node[below] {$\vecx$};
 \draw[black] (2,2) node[circle, draw, fill=black!50,inner sep=0pt, minimum width=4pt] {};
 \draw[black] (1.9,2) node[above] {$\vecy$};
 
 \draw[black] (-2,-1) node {$\tau$};
 \draw[black] (1.7,1.7) node[below] {$s$};

\end{tikzpicture}
\hspace{2cm}
\begin{tikzpicture}[scale=0.75]

 \draw[thick,black] (0,0) -- (2,-2) -- (5,1) -- (3.5,3) -- (0,3) -- (0,0) -- (-2,0.5) -- (-1.5,-2) -- (0,0);
 \draw[thick,black] (3.5,3) -- (2,2);
 \draw[ultra thick]  (0,0) -- (2,2);
 \draw[dotted]  (0,0) -- (2,1) -- (2,2) -- (1,2) -- (0,0);
 \draw[dashed] (0.5,-0.5) -- (0.5,0.5);
 \draw[dashed] (1,1) -- (0,2.5);
 \draw[dashed] (1.4,1.4) -- (3.5,-0.5);

 \draw[black] (0.5,-0.5) node[circle, draw, fill=black!50,inner sep=0pt, minimum width=4pt] {};
 \draw[black] (0.5,-0.6) node[below] {$\vecz_0$};
  \draw[black] (0.5,0.5) node[circle, draw, fill=black!50,inner sep=0pt, minimum width=4pt] {};
 \draw[black] (0.6,0.6) node[left] {$\vecz_1$};
  \draw[black] (1,1) node[circle, draw, fill=black!50,inner sep=0pt, minimum width=4pt] {};
 \draw[black] (1,0.9) node[right] {$\vecz_2$};
 \draw[black] (0,2.5) node[circle, draw, fill=black!50,inner sep=0pt, minimum width=4pt] {};
 \draw[black] (0,2.5) node[left] {$\vecz_3$};
  \draw[black] (1.4,1.4) node[circle, draw, fill=black!50,inner sep=0pt, minimum width=4pt] {};
 \draw[black] (1.45,1.5) node[left] {$\vecz_7$};
 \draw[black] (3.5,-0.5) node[circle, draw, fill=black!50,inner sep=0pt, minimum width=4pt] {};
 \draw[black] (3.65,-0.5) node[below] {$\vecz_8$};

 \draw[black] (0,0) node[circle, draw, fill=black!50,inner sep=0pt, minimum width=4pt] {};
 \draw[black] (0,-0.1) node[below] {$\vecx$};
 \draw[black] (2,2) node[circle, draw, fill=black!50,inner sep=0pt, minimum width=4pt] {};
 \draw[black] (1.9,2) node[above] {$\vecy$};
 
 \draw[black] (-2,-1) node {$\tau$};
 \draw[black] (1.8,1.75) node[below] {$s$};
 \draw[black] (1.2,2.3) node {$Q$};
 \end{tikzpicture}
 
\fi

\caption{The segments for the sublist $S_1$ and the segments $\overline{\vecz_{i-1}\,\vecz_i}$ for $i \in I_3$.}\label{S1}
\end{center}
\end{figure}

Noting that $I_1 \cap I_2$ may be nonempty,
  \begin{equation}\label{eq1}
   \cvar(f,S)  \le \sum_{i = 1}^3 \sum_{j \in I_i} |f(\vecz_j) - f(\vecz_{j-1})|
  \end{equation}
(where empty sums are taken to be zero).
Form the sublist $S_1$ of $S$ by including all points which are endpoints of line segments $\ell_j$ with $j \in I_1$ (see Figure~\ref{S1}). By \cite[Proposition~3.5]{DL2} (see also \cite{AD4}), $\vf(S_1) \le \vf(S)$, and so
  \begin{equation}\label{eq2}
  \frac{\sum_{j \in I_1} |f(\vecz_j) - f(\vecz_{j-1})|}{\vf(S)}
   \le \frac{\cvar(f,S_1)}{\vf(S_1)}
   \le \var(f,\tau).
  \end{equation}
Similarly, if $S_2$ is the sublist of $S$ including all points which are endpoints of line segments $\ell_j$ with $j \in I_2$ then
  \begin{equation}\label{eq3}
  \frac{\sum_{j \in I_2} |f(\vecz_j) - f(\vecz_{j-1})|}{\vf(S)}
   \le \frac{\cvar(f,S_2)}{\vf(S_2)}
   \le \var(f,s).
  \end{equation}

Finally, one may draw a quadrilateral $Q$ with vertices at $\vecx$ and $\vecy$ which contains $s\setminus \{\vecx,\vecy\}$ in its interior and $\tau\setminus \{\vecx,\vecy\}$ in its exterior. If $j \in I_3$, then $\ell_j$ is a crossing segment on at least one of the four lines which determine $Q$. In particular, at least one of the four lines must have at least $\frac{1}{4} |I_3|$ crossing segments, and hence $\vf(S) \ge \frac{1}{4} |I_3|$. By a simple triangle inequality estimate
    \begin{equation}\label{eq4}
  \frac{\sum_{j \in I_3} |f(\vecz_j) - f(\vecz_{j-1})|}{\vf(S)}
   \le \frac{2 |I_3| \norm{f}_{\infty,\tau_1} }{\frac{1}{4} |I_3|} = 8 \norm{f}_{\infty,\tau_1}.
  \end{equation}
Combining the three estimates (which are of course trivially true if we are summing over an empty set of indices), and taking the supremum over all lists $S$, we see that
  \[\var(f,\tau_1) \le \var(f,\tau) + \var(f,s) + 8 \norm{f}_{\infty,\tau_1}\]
and hence (using the induction hypothesis)
  \begin{align*}
   \norm{f}_{BV(\tau_1)} &= \norm{f}_{\infty,\tau_1} +  \var(f,\tau_1) \\
  &\le \norm{f}_{\infty,\tau} + \norm{f}_{\infty,s} + \var(f,\tau) + \var(f,s) + 8   \norm{f}_{\infty,\tau_1} \\
  &\le C_m\Bigl(\norm{f}_{\infty,\tau} + \sum_{j=1}^m \var(f,s_j)\Bigr)
      + 9 \norm{f}_{\infty,\tau_1} + \var(f,s) \\
  &\le (C_m+9) \norm{f}_{LG(\tau_1)}.
   \end{align*}
This completes the induction proof.
 
\end{proof}

The proof above shows that if $\sigma = \bigcup_{j=1}^m s_j$ then $\norm{f}_{BV(\sigma)} \le 9m \norm{f}_{LG(\sigma)}$. While the factor 9 is unlikely to be sharp, one cannot in general do better than a linear dependence on $m$.

We shall need the following simple result in Section~\ref{AC-isom}.

\begin{corollary}\label{LG-decomp}
Suppose that $\sigma = \bigcup_{j=1}^m s_j \in \LG$, and that $\sigma_1 = \bigcup_{j=1}^k s_j$ and $\sigma_2 = \bigcup_{j=k+1}^m s_j$ are also in $\LG$. Then for all $f \in BV(\sigma)$
  \[ \max(\norm{f}_{LG(\sigma_1)},\norm{f}_{LG(\sigma_2)} )
  \le \norm{f}_{LG(\sigma)}
  \le \norm{f}_{LG(\sigma_1)} + \norm{f}_{LG(\sigma_2)}.  \]
\end{corollary}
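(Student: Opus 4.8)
The plan is to reduce both inequalities to elementary facts about nonnegative reals, since the variation terms in the $LG$ norm split additively over the constituent segments while only the $\norm{\cdot}_\infty$ term behaves differently. First I would note that since $s_1,\dots,s_m$ is a proper representation of $\sigma$ (any two distinct segments meet only at endpoints), the sub-collections $s_1,\dots,s_k$ and $s_{k+1},\dots,s_m$ are automatically proper representations of $\sigma_1$ and $\sigma_2$. Hence $\norm{f}_{LG(\sigma_1)} = \norm{f}_{\infty,\sigma_1} + \sum_{j=1}^k \var(f,s_j)$ and $\norm{f}_{LG(\sigma_2)} = \norm{f}_{\infty,\sigma_2} + \sum_{j=k+1}^m \var(f,s_j)$ directly from the definition, with no appeal to representation-independence. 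Write $V_1 = \sum_{j=1}^k \var(f,s_j) \ge 0$ and $V_2 = \sum_{j=k+1}^m \var(f,s_j) \ge 0$; these are finite since $f \in BV(\sigma)$ by Theorem~\ref{LG-norm}, though finiteness is not actually needed for the inequalities themselves.

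The key observation is that $\norm{f}_{\infty,\sigma} = \max\bigl(\norm{f}_{\infty,\sigma_1},\norm{f}_{\infty,\sigma_2}\bigr)$, which holds because $\sigma = \sigma_1 \cup \sigma_2$ and the supremum of $\abs{f}$ over a union of two sets is the maximum of the two suprema. Setting $A_i = \norm{f}_{\infty,\sigma_i}$, one then has $\norm{f}_{LG(\sigma)} = \max(A_1,A_2) + V_1 + V_2$, while $\norm{f}_{LG(\sigma_1)} = A_1 + V_1$ and $\norm{f}_{LG(\sigma_2)} = A_2 + V_2$.

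For the upper bound I would use $\max(A_1,A_2) \le A_1 + A_2$ (valid since $A_1,A_2 \ge 0$), which gives $\norm{f}_{LG(\sigma)} \le (A_1 + V_1) + (A_2 + V_2) = \norm{f}_{LG(\sigma_1)} + \norm{f}_{LG(\sigma_2)}$. For the lower bound, from $A_1 \le \max(A_1,A_2)$ and $V_2 \ge 0$ I get $\norm{f}_{LG(\sigma_1)} = A_1 + V_1 \le \max(A_1,A_2) + V_1 + V_2 = \norm{f}_{LG(\sigma)}$, and symmetrically $\norm{f}_{LG(\sigma_2)} \le \norm{f}_{LG(\sigma)}$; taking the maximum of the two left-hand sides yields the claim. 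There is no genuine obstacle in this argument: the only point requiring any care is isolating the contribution of the sup-norm term (a maximum, not a sum) from the variation terms (which split additively), and once that split is made both estimates are immediate consequences of $\max(a,b) \le a + b$ and $\max(a,b) \ge a, b$ for $a,b \ge 0$.
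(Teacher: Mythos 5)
Your argument is correct and is exactly the elementary computation the paper has in mind (the corollary is stated without proof as a ``simple result''): the variation terms split additively over the segments while the sup-norm term is a maximum over the union, and the two inequalities then follow from $\max(a,b)\le a+b$ and $a,b\le\max(a,b)$ plus nonnegativity of the variation sums. Your observation that the sub-collections are automatically proper representations of $\sigma_1$ and $\sigma_2$, so no appeal to Theorem~\ref{indep-rep} is needed, is a correct and worthwhile point of care.
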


%%%%%%%%%%%%%%%%%%%%%%%%%%%%%%%%%%%%%%%%%%%%%%%%%%%%%%%%%%%%%%%%%%%%%%%%%%%%%%%%%%%%%%%%%%

 \section{Graphs and graph drawings}

Our proof of Theorem~\ref{main-theorem} uses a number of ideas from graph theory. In doing so we shall need to switch between our analyst's view of a set $\sigma$ sitting as a subset of the plane, and a more graph theoretic viewpoint, where a graph $G$ consists of a (possibly abstract) collection of vertices and points.  Roughly speaking, a set $\sigma \subseteq \C$ comes with topological information inherited as a subset of the plane, but without graph theoretic properties such as vertices and edges. On the other hand, a general graph may lack the topological structure suitable for our purposes. By restricting the class of objects to linear graphs, we can apply ideas from both areas. The challenge is to match up the different senses of `isomorphism'.

We shall generally stick with the standard graph theoretic notation and terminology, as may be found in references such as \cite{Die}. To simplify matters, we shall restrict ourselves to the setting of planar graphs. Thus an undirected, simple graph $G = (V,E)$ consists of a set $V$ of vertices, and a set $E$ comprising two element subsets of $V$ which are the edges of $G$. We shall only consider finite, undirected, simple graphs here, so the reader should assume these properties unless otherwise indicated.

A drawing of a planar graph $G = (V,E)$ consists of a set of points ${\hat V} = \{\vecx_i\}_{i=1}^n$ in $\R^2$ and a set of smooth curves ${\hat E} = \{\gamma_j\}_{j=1}^m$ in $\R^2$ such that
\begin{itemize}
 \item for $1 \le i < j \le n$, there is a curve joining $\vecx_i$ and $\vecx_j$ if and only if there is an edge joining $v_i$ and $v_j$,
 \item distinct curves in ${\hat E}$ do not intersect, except possibly at the endpoints.
\end{itemize}
We say that  ${\hat V}$ and ${\hat E}$ represent the vertices and edges of $G$.
The distinction between a graph and a drawing of the graph is of course often blurred, as is the distinction between the drawing and the subset of the plane formed by the union of the curves ${\hat E}$.

For our application in proving Theorem~\ref{main-theorem}, we start with an element $\sigma \in \LG$ and need to impose some graph theoretic structure on the set. Suppose that $\sigma \in \LG$ has proper representation  $\sigma = \bigcup_{j=1}^m \ls[\vecx_j,\vecx_j']$. The set $\sigma$ can then be thought of as providing a drawing of a graph whose vertex set consists of the endpoints of the line segments $\{\vecx_j,\vecx_j'\}_{j=1}^m$ and whose edge set is determined by the line segments. Of course, different representations will produce different graphs, but if the representation is understood, we shall use the notation $G(\sigma) = (V(\sigma),E(\sigma))$ to denote the graph generated by the representation.

Our next step is to match the topological notion of homeomorphism between subsets of the plane and the appropriate graph theoretic notions.

\subsection{Graph isomorphisms}

Recall that
 two simple graphs $ G_{1} = (V_{1},E_{1}) $ and $ G_{2} = (V_{2},E_{2}) $ are called (graph) isomorphic if there exists a bijective mapping, $ h : V_{1} \to V_{2} $ such that there is a edge between $u$ and $v$ in $G_{1}$, if and only if there exists an edge between $ h(u)$ and $h(v)$ in $G_{2} $.

\begin{definition}
A \textit{subdivision} of an edge $\{u,v\}$ of a graph $G$ comprises forming a new graph with an additional vertex $w$, and replacing the edge $\{u,v\}$ with the two edges $\{u,w\}$ and $\{w,v\}$. A \textit{subdivision} of $G$ is a graph formed by starting with $G$ and performing a finite sequence of subdivisions of edges.
\end{definition}

\begin{definition}
Two graphs  $ G_{1} $ and  $ G_{2} $ are \textit{(graph) homeomorphic} if there is a graph isomorphism from some subdivision of $ G_{1} $ to some subdivision of $ G_{2} $.
\end{definition}

For example, the graphs
\begin{center}
\begin{tikzpicture}[scale=0.8]%

\draw (-2,-2)--(-2,0)--(2,0)--(2,-2)--(-2,-2); % --(2,-2)--(2,0)--(4,1);
\draw (2,0) -- (0,-1);
\draw(-2,0) node[circle,fill,black,scale=0.5,label=above:{}]
{};
\draw (2,0) node[circle,fill,black,scale=0.5,label=above:{}]{};
\draw (2,-2) node[circle,fill,black,scale=0.5,label=below:{}]
{};
\draw (-2,-2) node[circle,fill,black,scale=0.5,label=left:{}]{};
\draw (0,-1) node[circle,fill,black,scale=0.5,label=below:{}]{};
\draw (0,-2.1) node[label=below:{$G_1$}]{};

\end{tikzpicture}%
\hspace{3cm}%
\begin{tikzpicture}[scale=0.8]%
\draw(-2,0)--(-2,-2)--(-2,0)--(2,-1)--(-2,-2)--(2,-1)--(4,-1)--(4,-2.5);
\draw(-2,0) node[circle,fill,black,scale=0.5,label=above:{}]
{};
\draw (-2,-2) node[circle,fill,black,scale=0.5,label=above:{}]{};
\draw (2,-1) node[circle,fill,black,scale=0.5,label=below:{}]{};

\draw(4,-1) node[circle,fill,black,scale=0.5,label=below:{}]{};

\draw(4,-2.5) node[circle,fill,black,scale=0.5,label=above:{}]{};

\draw(0,-2.8) node[label=above:{$G_2$}]{};

\end{tikzpicture}%
\end{center}
are graph homeomorphic via the following subdivisions and the graph isomorphism which sends each vertex $v$ to the corresponding vertex $\hat v$.
 \begin{center}
 \begin{tikzpicture}[scale=0.8]%
 \draw (-2,-2)--(-2,0)--(2,0)--(2,-2)--(-2,-2); %--(2,-2)--(2,0)--(4,1);
 \draw (2,0) -- (0,-1);
 \draw(-2,0) node[circle,fill,black,scale=0.5,label=left:{$ d $}] {};
 \draw (2,0) node[circle,fill,black,scale=0.5,label=above:{$ c $}]{};
 \draw (2,-2) node[circle,fill,black,scale=0.5,label=right:{$ f $}] {};
 \draw (-2,-2) node[circle,fill,black,scale=0.5,label=left:{$ e $}]{};
 \draw (0,-1) node[circle,fill,black,scale=0.5,label=below:{$ a $}]{};
 \draw (1,-0.5) node[circle,fill,black,scale=0.5,label=below:{$ b $}]{};
  \draw (0,-3.2) node[label=above:{$G_1' $}]{};

 \end{tikzpicture}%
 \hspace{1.5cm}%
 \begin{tikzpicture}[scale=0.8]%
\draw (-2,0)--(-2,-2)--(-2,0)--(2,-1)--(-2,-2)--(2,-1)--(4,-1)--(4,-2.5);
\draw(-2,0) node[circle,fill,black,scale=0.5,label=left:{$ \hat{d} $}]{};
\draw (-2,-2) node[circle,fill,black,scale=0.5,label=left:{$ \hat{f} $}]{};
\draw (2,-1) node[circle,fill,black,scale=0.5,label=above:{$ \hat{c} $}]{};

\draw(4,-1) node[circle,fill,black,scale=0.5,label=right:{$ \hat{b} $}]{};

\draw(4,-2.5) node[circle,fill,black,scale=0.5,label=right:{$ \hat{a} $}]{};
\draw(-2,-1) node[circle,fill,black,scale=0.5,label=left:{$ \hat{e} $}]{};
\draw(0.4,-2.1) node[label=left:{$G_2' $}]{};
\draw (0,-3.2) node {};
 \end{tikzpicture}%
 \end{center}

Fortunately, at least amongst the graphs which concern us, the two notions of homeomorphism agree.

 \begin{theorem}[{\cite[page 18]{GT}}]
Suppose that $G_1$ and $G_2$ are planar graphs with drawings ${\hat G}_1$ and ${\hat G}_2$ in the plane. Then
$G_1$ and $G_2$ are graph homeomorphic if and only if ${\hat G}_1$ and ${\hat G}_2$ are topologically homeomorphic.
 \end{theorem}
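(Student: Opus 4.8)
The plan is to prove the two implications separately. The forward implication is essentially formal; the reverse one rests on a local topological computation that recovers the degree of each vertex from the drawing alone.

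For the forward implication, suppose $G_1$ and $G_2$ are graph homeomorphic, so there are subdivisions $G_1'$ of $G_1$ and $G_2'$ of $G_2$ and a graph isomorphism $h \colon G_1' \to G_2'$. Subdividing an edge only introduces extra interior points on its representing curve, so a drawing of $G_1'$ can be taken with the very same underlying point set $\hat G_1$, and likewise $\hat G_2$ serves for $G_2'$; it therefore suffices to realise $h$ by a homeomorphism of point sets. I would assemble this homeomorphism edge by edge: send each vertex-point of $G_1'$ to the corresponding vertex-point of $G_2'$, and send the arc representing an edge $e$ homeomorphically onto the arc representing $h(e)$, matching endpoints. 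Because distinct edge-arcs of a drawing meet only at common vertices, these partial maps agree wherever they overlap, so by the pasting lemma they glue to a continuous bijection $\hat G_1 \to \hat G_2$, which is a homeomorphism since $\hat G_1$ is compact and $\hat G_2$ is Hausdorff. (With $h$ the identity the same construction shows that any two drawings of a single graph are homeomorphic, so the statement does not depend on the drawings chosen.)

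For the reverse implication, let $\phi \colon \hat G_1 \to \hat G_2$ be a homeomorphism. The crucial step is a purely local fact: for a point $p$ of a finite graph drawing $\hat G$ and any sufficiently small connected open neighbourhood $U$ of $p$, the number $n(p)$ of connected components of $U \setminus \{p\}$ is independent of $U$, and it equals $\deg p$ when $p$ is a vertex and equals $2$ when $p$ lies in the interior of an edge, since near $p$ the drawing is homeomorphic to a star with $n(p)$ prongs. In particular $p$ has a neighbourhood homeomorphic to an open interval exactly when $n(p) = 2$; I will call the points with $n(p) \neq 2$ --- that is, the vertices of degree $\neq 2$, including isolated vertices and degree-one vertices --- the \emph{branch points} of $\hat G$, and write $B(\hat G)$ for this finite set. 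Since $n(\cdot)$ is manifestly a topological invariant, $\phi$ restricts to a bijection $B(\hat G_1) \to B(\hat G_2)$ with $n(\phi(p)) = n(p)$ throughout.

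It remains to upgrade this to a graph homeomorphism, which I would do by exhibiting isomorphic subdivisions of $G_1$ and $G_2$. Each connected component of $\hat G_i \setminus B(\hat G_i)$ is a connected $1$-manifold without boundary, hence a copy of $\R$ or of a circle, with compact closure meeting $B(\hat G_i)$ in at most two points; in $G_i$ these components are precisely the maximal bare paths whose interior vertices all have degree $2$, together with any loops based at a single branch point and any circle components of $\hat G_i$. Since $\phi$ maps $\hat G_1 \setminus B(\hat G_1)$ homeomorphically onto $\hat G_2 \setminus B(\hat G_2)$ and commutes with closure, it matches these pieces bijectively in a way compatible with their incidences to branch points. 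Subdividing $G_1$ and $G_2$ far enough --- so that loops and circle components become genuine simple cycles, several parallel bare arcs joining one pair of branch points become paths of length at least two, and matched bare arcs are given equal length --- then turns $\phi$ into an honest graph isomorphism of the two subdivisions, whence $G_1$ and $G_2$ are graph homeomorphic. I expect the main obstacle to be precisely this last piece of bookkeeping: since the paper works with simple graphs, one must insert sufficiently many subdivision vertices in advance so that loops and parallel bare arcs are represented legitimately, and one must track the low-degree cases $n(p) = 0$ and $n(p) = 1$ separately. The genuinely substantive ingredient, though, is the local characterisation of vertex degree; the remainder is finite combinatorics.
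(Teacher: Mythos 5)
Your argument is correct in outline, but it takes a genuinely different route from the paper for the one direction the paper actually proves. Note first that the paper treats this statement as a citation to Gross--Tucker and only supplies a proof of the implication it needs (topological homeomorphism implies graph homeomorphism, recorded as Corollary~\ref{homeo}); your forward direction, glueing arc-by-arc homeomorphisms via the pasting lemma and the compact-to-Hausdorff trick, is standard and is not argued in the paper at all. For the reverse direction the paper's device is to use the given homeomorphism $h$ directly to manufacture a common pair of subdivisions: it enlarges the vertex sets to $V(\sigma)\cup h^{-1}(V(\tau))$ and $h(V(\sigma))\cup V(\tau)$, defines edges as arcs avoiding the enlarged vertex sets, and observes that $h$ then restricts to a graph isomorphism of two graphs that are by construction subdivisions of the originals. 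You instead characterise the essential vertices intrinsically, via the local invariant $n(p)$ counting components of a small punctured neighbourhood, and then match the maximal open arcs in the complements of the branch-point sets. Your approach buys more --- it shows that \emph{every} homeomorphism must preserve branch points and their degrees, independently of any chosen vertex sets --- but at the cost of exactly the case bookkeeping you flag (circle components, loops at a single branch point, parallel bare arcs, and the simple-graph constraint), none of which arises in the paper's version because the transported vertex sets automatically yield legitimate subdivisions matched by $h$ itself. Both arguments are sound; yours is the more topological and self-contained, the paper's the more economical.
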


In fact we shall only use one direction of this result. For completeness, we include a proof of this implication.

\begin{corollary}\label{homeo}
Suppose that $\sigma,\tau \in \LG$ have proper representations which generate graphs $G(\sigma) = (V(\sigma),E(\sigma))$ and $G(\tau) = (V(\tau),E(\tau))$.
If $\sigma$ and $\tau$ are topologically homeomorphic then $G(\sigma)$ and $G(\tau)$ are graph homeomorphic.
\end{corollary}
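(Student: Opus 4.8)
The plan is to use the fact that, although a homeomorphism $h\colon\sigma\to\tau$ need not respect the two given proper representations, it \emph{does} respect suitably refined ones, and that a fine enough common refinement encodes the graph structure topologically. Fix a homeomorphism $h\colon\sigma\to\tau$ and set
\[
   P = V(\sigma)\cup h^{-1}(V(\tau))\subseteq\sigma,
   \qquad
   Q = V(\tau)\cup h(V(\sigma))\subseteq\tau .
\]
Both are finite since $V(\sigma),V(\tau)$ are finite and $h$ is injective. I would first record the elementary fact that in a proper representation no vertex lies in the interior of a segment (otherwise two distinct segments would meet at a non-endpoint), so each point of $h^{-1}(V(\tau))$ is either already in $V(\sigma)$ or lies in the interior of exactly one segment $s_j$. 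Cutting each $s_j$ at the finitely many points of $P$ in its interior therefore produces a new proper representation of $\sigma$; its associated graph $G(\sigma)'$ is obtained from $G(\sigma)$ by a finite sequence of edge subdivisions, i.e.\ $G(\sigma)'$ is a subdivision of $G(\sigma)$ whose vertices are represented by $P$. Symmetrically one builds a subdivision $G(\tau)'$ of $G(\tau)$ with vertices represented by $Q$.

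The crux is to characterise the edge set of $G(\sigma)'$ in terms involving only the topological pair $(\sigma,P)$. I claim that, for distinct $u,v\in P$, the pair $\{u,v\}$ is an edge of $G(\sigma)'$ precisely when some connected component $C$ of $\sigma\setminus P$ satisfies $\overline{C}=C\cup\{u,v\}$. Properness gives that the components of $\sigma\setminus V(\sigma)$ are exactly the open segments $s_j^{\circ}$, hence the components of $\sigma\setminus P$ are exactly the open sub-segments into which $P$ cuts the $s_j$; the closure of such a component adds its two endpoints, which are necessarily distinct (segments are nondegenerate), are adjacent points of $P$ along some $s_j$, and are exactly the endpoints of the corresponding edge of $G(\sigma)'$. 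Conversely every edge of $G(\sigma)'$ arises this way, and since two points of the plane span at most one segment, distinct components yield distinct endpoint pairs; in particular $G(\sigma)'$, like $G(\sigma)$, is genuinely simple, so no loops or multiple edges complicate the picture. The identical description applies to $G(\tau)'$ and $(\tau,Q)$.

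Granting this, the conclusion follows at once. Since $h$ is a bijection, $h(P)=h(V(\sigma))\cup h(h^{-1}(V(\tau)))=h(V(\sigma))\cup V(\tau)=Q$, so $h$ restricts to a bijection between the vertex sets of $G(\sigma)'$ and $G(\tau)'$. As $h$ is a homeomorphism taking $P$ onto $Q$, it maps $\sigma\setminus P$ homeomorphically onto $\tau\setminus Q$, hence carries connected components to connected components, and it commutes with closure; so $\{u,v\}$ is an edge of $G(\sigma)'$ if and only if $\{h(u),h(v)\}$ is an edge of $G(\tau)'$. Thus $h$ induces a graph isomorphism $G(\sigma)'\to G(\tau)'$, and since $G(\sigma)'$ and $G(\tau)'$ are subdivisions of $G(\sigma)$ and $G(\tau)$ respectively, $G(\sigma)$ and $G(\tau)$ are graph homeomorphic.

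The step I expect to be the real work is the topological characterisation of the edges of the refined graph and the verification that the refinements are subdivisions in the formal sense: one must see that the only consequence of "properness" needed is that vertices avoid segment interiors, and that nondegeneracy of the segments is exactly what rules out the loops and digons that would otherwise make "subdivision" awkward. Once the edge set of $G(\sigma)'$ is expressed through connected components of $\sigma\setminus P$ and their closures, the homeomorphism $h$ transports everything mechanically. A minor point to dispatch separately is the degenerate situation (for instance $\sigma$ a single segment, or a single point), where the same argument applies with no change.
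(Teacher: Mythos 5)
Your proposal is correct and follows essentially the same route as the paper: form the enlarged vertex sets $V(\sigma)\cup h^{-1}(V(\tau))$ and $h(V(\sigma))\cup V(\tau)$, observe that these give subdivisions of $G(\sigma)$ and $G(\tau)$, and transport the edge relation through $h$ via a purely topological description of the edges. The only (cosmetic) difference is that you characterise edges by connected components of the complement of the vertex set and their closures, whereas the paper uses arcs avoiding the vertex set except at their endpoints; the two descriptions are equivalent here, and your extra care about properness, simplicity, and multi-edges is a welcome tightening rather than a divergence.
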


\begin{proof}
Suppose that $h: \sigma \to \tau$ is a homeomorphism.
Define sets
  \[
   {\hat V}(\sigma) = V(\sigma) \cup h^{-1}(V(\tau)), \qquad \qquad
   {\hat V}(\tau) = h(V(\sigma)) \cup V(\tau).
  \]
It is clear that $h$ maps ${\hat V}(\sigma)$ bijectively to ${\hat V}(\tau)$.

If $\vecu,\vecv \in {\hat V}(\sigma)$, then we shall say that $\{\vecu,\vecv \} \in {\hat E}(\sigma)$ if there exists a continuous curve $\gamma: [0,1] \to \sigma$ such that $\gamma(0) = \vecu$, $\gamma(1) = \vecv$ and $\gamma(t) \not\in {\hat V}(\sigma)$ for $0 < t < 1$. The edge set ${\hat E}(\tau)$ is defined similarly, and so we can consider the two graphs ${\hat G}(\sigma) = \bigl({\hat V}(\sigma),{\hat E}(\sigma)\bigr)$ and ${\hat G}(\tau) = \bigl({\hat V}(\tau),{\hat E}(\tau)\bigr)$. Clearly $\gamma$ is a curve joining $\vecu,\vecv \in {\hat V}(\sigma)$ if and only if $h \circ \gamma$ is a curve joining $h(\vecu),h(\vecv) \in {\hat V}(\tau)$. It follows that the graphs ${\hat G}(\sigma)$ and ${\hat G}(\tau)$ are isomorphic.  Since ${\hat G}(\sigma)$ is a subdivision of $G(\sigma)$ and ${\hat G}(\tau)$ is a subdivision of $G(\tau)$, the graphs  $G(\sigma)$ and $G(\tau)$ are graph homeomorphic.
 
\end{proof}

We note that there is a distinct graph theoretic notion of topological isomorphism (see \cite{GT}).
Homeomorphic planar graphs need not be topologically isomorphic.

We can now show that the $LG(\sigma)$ norm is independent of the representation of $\sigma$ as a union of line segments.

\begin{theorem}\label{indep-rep}
Suppose that $\sigma = \bigcup_{j=1}^m s_j = \bigcup_{k=1}^n t_k$ are two proper representations of $\sigma \in \LG$. If $f: \sigma \to \C$ then
   $\displaystyle  \sum_{j=1}^m \var(f,s_j) = \sum_{k=1}^n \var(f,t_k)$.
\end{theorem}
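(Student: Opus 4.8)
I want to show that the sum $\sum_j \var(f,s_j)$ depends only on the set $\sigma$ and not on the chosen proper representation. The key observation is that two proper representations of the same set $\sigma$ generate two graphs $G_1 = G(\sigma)$ and $G_2 = G(\sigma)$ which, as drawings, are the same subset of the plane and hence trivially topologically homeomorphic; so by (the proof of) Corollary~\ref{homeo} they have a common subdivision. More concretely, I would first reduce to the case where one representation refines the other: given the two representations $\{s_j\}$ and $\{t_k\}$, form the common refinement by taking the vertex set $W = V_1 \cup V_2$ (all endpoints from both representations) and cutting every segment of either representation at every point of $W$ that lies on it. This produces a third proper representation $\{r_\ell\}_{\ell=1}^N$ in which every $r_\ell$ is a sub-segment of exactly one $s_j$ and of exactly one $t_k$. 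It then suffices to prove that refining a single representation by subdividing its segments does not change the sum, since one can pass from $\{s_j\}$ to $\{r_\ell\}$ and from $\{t_k\}$ to $\{r_\ell\}$ by such refinements.

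**The single-segment additivity step.** The heart of the matter is then the following: if a closed line segment $s = \ls[\vecx,\vecy]$ is written as $s = s' \cup s''$ where $s' = \ls[\vecx,\vecw]$ and $s'' = \ls[\vecw,\vecy]$ with $\vecw$ an interior point, then $\var(f,s) = \var(f,s') + \var(f,s'')$. Since $s$, $s'$, $s''$ are all genuine line segments, by the remarks in Section~\ref{LG} the quantities $\var(f,s)$, $\var(f,s')$, $\var(f,s'')$ are just the classical one-dimensional total variations of $f$ along these segments (after the obvious affine identification with subintervals of $[0,1]$, with $\vecw$ corresponding to an interior point). Classical total variation is additive over adjacent subintervals: $\operatorname{Var}(g;[0,1]) = \operatorname{Var}(g;[0,c]) + \operatorname{Var}(g;[c,1])$ for any $c \in (0,1)$. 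This is the standard fact that the variation function is monotone and additive; it follows immediately because any partition of $[0,1]$ can be refined to include $c$ without decreasing the associated sum, and a partition containing $c$ splits as the concatenation of a partition of $[0,c]$ and one of $[c,1]$. Iterating, $\var(f,s) = \sum_\ell \var(f, r_\ell)$ whenever $\{r_\ell\}$ is the decomposition of $s$ induced by finitely many interior subdivision points.

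**Assembling the proof.** Putting the pieces together: with the common refinement $\{r_\ell\}$ as above, each $s_j$ is subdivided by the points of $W$ lying in its interior into sub-segments $r_\ell$ (those with $r_\ell \subseteq s_j$), so by the additivity step $\var(f,s_j) = \sum_{r_\ell \subseteq s_j} \var(f,r_\ell)$, and summing over $j$ (the sets $\{r_\ell : r_\ell \subseteq s_j\}$ partition $\{r_\ell\}$ up to the shared endpoints, which carry no variation) gives $\sum_j \var(f,s_j) = \sum_\ell \var(f,r_\ell)$. The identical argument applied to the other representation gives $\sum_k \var(f,t_k) = \sum_\ell \var(f,r_\ell)$, and the two are therefore equal.

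**Expected obstacle.** The only genuinely delicate point is the bookkeeping in constructing the common refinement: one must check that cutting each $s_j$ at all points of $(V_1 \cup V_2) \cap s_j$ — finitely many points, since $s_j$ meets each $t_k$ in at most an endpoint by properness, and the original representations are finite — yields a decomposition that is still \emph{proper} (distinct $r_\ell$ meet only at endpoints) and that is simultaneously a subdivision of $\{s_j\}$ and of $\{t_k\}$. Properness is inherited because the $r_\ell$ lie inside the $s_j$ which are already pairwise almost-disjoint, and within a single $s_j$ the sub-segments overlap only at the chosen cut points. Once this is in hand, everything else is the classical additivity of variation, which is routine. (One could alternatively phrase the argument via Corollary~\ref{homeo}, noting that both graphs are subdivisions of the minimal graph obtained by merging all collinear adjacent edges, but the direct refinement argument above is cleaner and self-contained.)
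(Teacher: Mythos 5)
Your proof is correct and follows essentially the same route as the paper: pass to a common refinement of the two representations (the paper obtains it by applying the proof of Corollary~\ref{homeo} with the identity homeomorphism, which amounts to exactly your vertex set $V_1\cup V_2$) and then use the classical additivity of one-dimensional variation over adjacent subintervals. One tiny quibble: your appeal to properness to show $s_j$ meets each $t_k$ in at most an endpoint is not valid across two different representations (a $t_k$ may overlap $s_j$ in a subsegment), but the finiteness of $(V_1\cup V_2)\cap s_j$ that you actually need is immediate since $V_1\cup V_2$ is finite.
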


\begin{proof}
Let $G_1$ be the graph generated by the representation $\bigcup_{j=1}^m s_j$. If one takes a subdivision of (the edge represented by) $s_j = \ls[\vecx,\vecy]$ then one replaces $s_j$ with two smaller line segments $s_{j,1} = \ls[\vecx,\vecw]$ and $s_{j,2} = \ls[\vecw,\vecy]$ which produces another proper representation. Furthermore, the usual properties of variation on an interval ensure that $\var(f,s_j) = \var(f,s_{j,1}) + \var(f,s_{j,2})$.

Let $G_2$ is the graph generated by the representation $\bigcup_{k=1}^n t_k$.  Following the proof of Corollary~\ref{homeo} (with the identity homeomorphism) shows that there is a graph $G$ which is a subdivision of both $G_1$ and $G_2$, and which determines a proper representation $\sigma = \bigcup_{i=1}^r {\hat s}_i$. By the earlier comment, $\sum_{j=1}^m \var(f,s_j) = \sum_{i=1}^r \var(f,{\hat s}_i) = \sum_{k=1}^n \var(f,t_k)$.
 
\end{proof}

 \subsection{Subgraphs}

The proof of Theorem~\ref{main-theorem} proceeds via an inductive procedure involving subgraphs of the original graphs.

\begin{definition}
 A graph $ \hat{G} $ is a \textit{subgraph} of a graph $ G $ if each of its vertices belongs to $ V(G) $ and each of its edges belongs to $ E(G) $.
 \end{definition}

Standard algorithms for finding minimal spanning trees show that if $G$ is a connected graph with $n$ edges then there exist subgraphs $G_{1}\subseteq  G_{2}\subseteq\dots\subseteq G_{n}=G$, when each $G_{k}$ is a connected graph with $k$ edges. We shall call such a sequence an
\textit{edge-by-edge decomposition} of $ G $. The following result is straightforward.

 \begin{lemma}\label{edges}
 Suppose that $G$ and $\hat{G}$ are isomorphic graphs with $ n$ edges, with  graph isomorphism $ f$. If $\{G_{k}\}_{k=1}^{n}$ is an edge-by-edge decomposition of $ G $ then $\{f(G_{k})\}_{k=1}^{n}$ is an edge-by-edge decomposition of $\hat{G}$.
  \end{lemma}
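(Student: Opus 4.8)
The plan is simply to unwind the definition of an edge-by-edge decomposition and verify its four ingredients in turn, using nothing beyond the fact that a graph isomorphism $f$ is a vertex bijection which induces a bijection on edges. For a subgraph $H \subseteq G$ I would write $f(H)$ for the graph with vertex set $f(V(H))$ and edge set $\{\{f(u),f(v)\} \st \{u,v\} \in E(H)\}$; the first order of business is to check that $f(H)$ is indeed a subgraph of $\hat G$, which is immediate since $f$ carries $V(G)$ into $V(\hat G)$ and, by the isomorphism property, carries $E(G)$ into $E(\hat G)$.

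Next I would record the monotonicity $H_1 \subseteq H_2 \implies f(H_1) \subseteq f(H_2)$ (taking images preserves inclusions), so that the chain $G_1 \subseteq \dots \subseteq G_n$ transports to $f(G_1) \subseteq \dots \subseteq f(G_n)$. For the top term, $f(G_n) = f(G) = \hat G$ because $f$ is onto $V(\hat G)$ and --- here one uses both directions of the isomorphism condition --- induces a bijection between $E(G)$ and $E(\hat G)$. For the edge counts, injectivity of $f$ on vertices makes the map $\{u,v\} \mapsto \{f(u),f(v)\}$ injective on two-element subsets of $V(G)$, so $|E(f(G_k))| = |E(G_k)| = k$. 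Finally, for connectedness of $f(G_k)$: a path in $G_k$ between two of its vertices maps vertex by vertex to a walk in $f(G_k)$ whose vertices remain distinct (as $f$ is injective), hence to a path; since every vertex of $f(G_k)$ is the image of a vertex of $G_k$, connectedness of $f(G_k)$ follows.

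I do not expect a genuine obstacle: the content is entirely formal. The only point deserving explicit mention is that a graph isomorphism induces a \emph{bijection} on edge sets, not merely an injection --- this is exactly what forces $f(G_n)$ to be all of $\hat G$ rather than a proper subgraph, and hence what makes $\{f(G_k)\}_{k=1}^n$ terminate correctly.
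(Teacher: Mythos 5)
Your proof is correct and is exactly the routine verification the paper has in mind: the paper offers no argument at all, merely declaring the lemma ``straightforward,'' and your check of the four defining properties (subgraph, nesting, edge count via injectivity, connectedness and $f(G_n)=\hat G$ via bijectivity) supplies the details. No gaps.
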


\section{Algebra homomorphisms for $BV$ functions}

The results of the last two sections allow an easy proof of the following theorem.

\begin{theorem}\label{BV-isom}
Suppose that $\sigma,\tau \in \LG$ are homeomorphic. Then $BV(\sigma)$ is isomorphic to $BV(\tau)$ (as Banach algebras).
\end{theorem}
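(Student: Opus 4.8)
The plan is to realize the isomorphism as composition with a homeomorphism $\phi\colon\sigma\to\tau$ which is affine on each segment of a suitably chosen proper representation, and then to verify boundedness of $f\mapsto f\circ\phi^{-1}$ in both directions by comparing $\LG$ norms, which by Theorem~\ref{LG-norm} is enough.

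First I would fix proper representations $\sigma=\bigcup_{j=1}^m s_j$ and $\tau=\bigcup_{k=1}^n t_k$, generating graphs $G(\sigma)$ and $G(\tau)$. Since $\sigma$ and $\tau$ are homeomorphic, Corollary~\ref{homeo} shows $G(\sigma)$ and $G(\tau)$ are graph homeomorphic, so there are subdivisions of the two graphs that are graph isomorphic. A subdivision of $G(\sigma)$ corresponds to refining the representation of $\sigma$ by splitting a segment at an interior point, which again yields a proper representation and, by Theorem~\ref{indep-rep}, leaves $\sum_j\var(f,s_j)$ and hence $\norm{f}_{\LG(\sigma)}$ unchanged; likewise for $\tau$. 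Passing to these common subdivisions, I may therefore assume there is a graph isomorphism $h\colon V(\sigma)\to V(\tau)$ under which the segments of $\sigma$ are in bijection (say $s_j\leftrightarrow t_{\pi(j)}$) with the segments of $\tau$, with endpoints corresponding via $h$.

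Next I would build $\phi$: on each segment $s_j=\ls[\vecx_j,\vecx_j']$ let $\phi$ be the unique affine bijection onto $t_{\pi(j)}$ sending $\vecx_j$ to $h(\vecx_j)$ and $\vecx_j'$ to $h(\vecx_j')$. Because $h$ is a graph isomorphism, these definitions agree at every shared vertex, so $\phi$ is a well-defined continuous map $\sigma\to\tau$; properness of the two representations gives that $\phi$ is a bijection, and as $\sigma$ is compact and $\tau$ Hausdorff, $\phi$ is a homeomorphism. Define $\Phi\colon BV(\sigma)\to BV(\tau)$ by $\Phi(f)=f\circ\phi^{-1}$, a linear multiplicative bijection with inverse $g\mapsto g\circ\phi$. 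Now $\norm{\Phi(f)}_\infty=\norm{f}_\infty$, and for each $k$ the map $\phi$ restricts to an affine homeomorphism of $s_{\pi^{-1}(k)}$ onto $t_k$; since $\var(\cdot,t_k)$ may be computed with the classical one-dimensional definition (because $BV(t_k)\cong BV[0,1]$), and classical total variation is invariant under monotone reparametrisation, $\var(\Phi(f),t_k)=\var(f,s_{\pi^{-1}(k)})$. Summing gives $\norm{\Phi(f)}_{\LG(\tau)}=\norm{f}_{\LG(\sigma)}$, so $\Phi$ is an isometry for the $\LG$ norms. By Theorem~\ref{LG-norm} the $\LG$ and $BV$ norms are equivalent on each of $\sigma$ and $\tau$, whence $\Phi$ and $\Phi^{-1}$ are bounded for the $BV$ norms, proving the theorem.

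There is no serious obstacle here; the essential point is that the $BV$ norm on an $\LG$ set localises (up to equivalence) to the individual segments via $\norm{\cdot}_{\LG}$, and classical variation is reparametrisation-invariant, so none of the heavier machinery needed for the $AC(\sigma)$ version (edge-by-edge decompositions, Lemma~\ref{edges}) is required. The one place to be careful is the reduction from homeomorphic to graph-isomorphic graphs via a common subdivision, together with the verification that this refinement leaves $\norm{\cdot}_{\LG}$ — and hence the assertion to be proved — genuinely unchanged, which is exactly Theorem~\ref{indep-rep}.
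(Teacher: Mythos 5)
Your proof is correct and follows essentially the same route as the paper's: pass to a common subdivision so the two sets are drawings of isomorphic graphs, build a piecewise-affine homeomorphism $\phi$, observe that $f\mapsto f\circ\phi^{-1}$ is an isometry for the $\LG$ norms, and conclude via the norm equivalence in Theorem~\ref{LG-norm}. The only difference is presentational: the paper assembles $\phi$ through an edge-by-edge decomposition (Lemma~\ref{edges}), which, as you correctly note, is not needed for the $BV$ statement and is really there to set up the later $AC$ argument.
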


\begin{proof}
By Corollary~\ref{homeo} (and the earlier comments) we may consider $\sigma$ and $\tau$ to be graph drawings of isomorphic graphs with each edge represented by a line segment.
By Lemma~\ref{edges} then, there are line segments $\{s_k\}_{k=1}^m$ and $\{t_k\}_{k=1}^m$ so that if $\sigma_j = \bigcup_{k=1}^j s_k$ and $\tau_j = \bigcup_{k=1}^j t_k$ then
$\{\sigma_j\}_{j=1}^m$ and $\{\tau_j\}_{j=1}^m$ are edge-by-edge decompositions of $\sigma$ and $\tau$, and such that each $\sigma_j$ is homeomorphic and graph isomorphic to $\tau_j$.

There are two affine homeomorphisms from $s_1$ to $t_1$. Fix one of these and call it $\phi_1$.
There is now a unique homeomorphism $\phi_2$ from $\sigma_2$ to $\tau_2$, extending $\phi_1$ which is also affine when restricted to $s_2$. Continuing in this way, one can construct a homeomorphism $\phi: \sigma \to \tau$ which is affine on each of the component line segments.

For $f: \sigma \to \C$, define $\Phi(f) : \tau \to \C$ by $\Phi(f)(\vecz) = f(\phi^{-1}(\vecz))$. Clearly $\Phi$ is linear and multiplicative. Since the sets are just line segments, for each $k$,
  \[ \var(f,s_k) = \var(\Phi(f),t_k). \]
Consequently, $\Phi$ maps $\bigl(BV(\sigma),\norm{\cdot}_{LG(\sigma)}\bigr)$ isometrically to $\bigl(BV(\tau),\norm{\cdot}_{LG(\tau)}\bigr)$.
By Theorem~\ref{LG-norm}, this implies that the spaces $BV(\sigma)$ and $BV(\tau)$ are isomorphic as Banach algebras.
 
\end{proof}

\section{$AC$ functions}\label{AC-isom}

It is not too hard to construct a Banach algebra isomorphism $\Phi: BV[0,1] \to BV[0,1]$ which does not restrict to an isomorphism from $AC[0,1] \to AC[0,1]$. Our aim in this section is to show that the map defined in the proof of Theorem~\ref{BV-isom} does preserve absolutely continuous functions.

In the proof we will need an absolutely continuous function which is almost the indicator function of one of the component line segments of $\sigma \in \LG$. For $0 < \delta < \frac{1}{2}$ let $k_\delta \in AC[0,1]$ be the function which is $1$ on $[\delta,1-\delta]$, $0$ on $[0,\delta/2] \cup [1-\delta/2,1]$ and which linearly interpolates the remaining values.

\begin{lemma}\label{pw-linear-h}
Suppose that $\sigma = \bigcup_{k=1}^m s_k \in \LG$ with $s_1 = [0,1]$. Let $0 < \delta < \frac{1}{2}$ and define $h: \sigma \to \R$ by
  \[ h(\vecz) = \begin{cases}
      0,      & \vecz \not\in s_1, \\
      k_\delta(\vecz),   & \vecz \in s_1.
               \end{cases}                                                                                                                                                                                                                                                                                                                              \]
Then $h \in AC(\sigma)$.
\end{lemma}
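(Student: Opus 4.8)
The plan is to produce a sequence of polynomials (in two real variables) whose restrictions to $\sigma$ converge to $h$ in the $BV(\sigma)$ norm, and by Theorem~\ref{LG-norm} it suffices to produce such a sequence converging in the $\norm{\cdot}_{LG(\sigma)}$ norm. The key structural observation is the one used in the proof of Theorem~\ref{LG-norm}: one can enclose $s_1$ in a quadrilateral $Q$ (determined by four lines) whose interior meets $\sigma$ only in $s_1\setminus\{\text{endpoints}\}$, and whose closure is disjoint from $\sigma\setminus s_1$ except possibly at the two endpoints $\vecx,\vecy$ of $s_1$. Since $h$ vanishes on a neighbourhood (relative to $\sigma$) of those endpoints — indeed $h$ is $0$ on $[0,\delta/2]\cup[1-\delta/2,1]$ — we may further shrink $Q$ so that its closure is \emph{disjoint} from $\sigma\setminus s_1$ and so that $h$ vanishes on $s_1$ outside a compact subinterval $J$ lying in the (relative) interior of $Q\cap s_1$.

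First I would handle the one-dimensional problem on $s_1 = [0,1]$. Since $k_\delta \in AC[0,1] = $ closure of polynomials in one real variable, choose real polynomials $p_n$ on $\R$ with $\norm{k_\delta - p_n}_{BV[0,1]} \to 0$; we may also arrange $p_n \to 0$ uniformly on $[0,\delta/2]$ and on $[1-\delta/2,1]$ (approximating a function that already vanishes there). The difficulty is that $p_n$, viewed as a function on $\R^2$ (constant in the second variable), does \emph{not} vanish off $s_1$, so $p_n|_\sigma \ne h$. The remedy is to multiply by a smooth planar cutoff. Second I would construct $\psi \in C^\infty(\R^2)$, hence $\psi \in C^1(\sigma) \subseteq AC(\sigma)$ by \cite{DL2}, with $\psi \equiv 1$ on $J$ and $\psi \equiv 0$ outside $Q$ (a standard bump function supported in the open quadrilateral); then $g_n := (\psi\cdot \tilde p_n)|_\sigma$, where $\tilde p_n(x,y)=p_n(x)$, lies in $AC(\sigma)$ because $AC(\sigma)$ is an algebra containing both $\psi|_\sigma$ and $\tilde p_n|_\sigma$. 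By construction $g_n$ vanishes on $\sigma \setminus s_1$ (since $\psi$ does there), so $g_n = (\psi p_n)|_{s_1}$ extended by zero, exactly the shape of $h$.

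Third I would estimate $\norm{h - g_n}_{LG(\sigma)}$ using Corollary~\ref{LG-decomp} with the splitting $\sigma = s_1 \cup (\bigcup_{k\ge2}s_k)$. On $\sigma_2 := \bigcup_{k\ge 2}s_k$ both $h$ and $g_n$ are identically $0$, so that contribution vanishes. On $s_1$ we must bound $\norm{k_\delta - \psi p_n}_{BV[0,1]}$. Split $[0,1]$ into $J$, where $\psi = 1$ so $\psi p_n = p_n$ and the estimate is just $\norm{k_\delta - p_n}_{BV(J)} \le \norm{k_\delta - p_n}_{BV[0,1]} \to 0$; and the complementary piece $[0,1]\setminus J$, where $k_\delta = 0$ and we need $\norm{\psi p_n}_{BV([0,1]\setminus J)} \to 0$. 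On that set $|p_n|$ is small (uniform convergence to the zero function there, after our arrangement), $|\psi| \le 1$, and $\var(\psi p_n) \le \norm{\psi}_\infty \var(p_n) + \norm{p_n}_\infty \var(\psi)$ on each subinterval by the product rule for one-variable variation; since $\var(p_n, [0,\delta/2])$ and $\var(p_n,[1-\delta/2,1])$ tend to $0$ (as $p_n \to 0$ there in $BV$) and $\var(\psi)$ is a fixed constant while $\norm{p_n}_\infty \to 0$ there, this term vanishes. Hence $\norm{h - g_n}_{LG(\sigma)} \to 0$, and since $g_n \in AC(\sigma)$ and $AC(\sigma)$ is closed in $BV(\sigma) \simeq (BV(\sigma),\norm{\cdot}_{LG(\sigma)})$, we conclude $h \in AC(\sigma)$.

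The main obstacle is the interface bookkeeping at the endpoints $\vecx,\vecy$ of $s_1$: one must genuinely use that $k_\delta$ (and its variation) vanishes near $0$ and $1$ in order to get the cutoff $\psi$ to kill the parts of $\tilde p_n$ that live off $s_1$ without destroying the approximation on $s_1$. Everything else is the routine algebra/closedness machinery plus the product rule for bounded variation; the geometric fact that a suitable enclosing quadrilateral $Q$ exists is exactly the one already exploited in the proof of Theorem~\ref{LG-norm}.
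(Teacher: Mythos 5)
Your argument is correct, but it takes a much longer route than the paper, whose proof consists of an \emph{exact} factorisation: by \cite[Proposition~4.4]{AD1} the horizontal extension $h_1(x+iy)=k_\delta(x)$ and the tent function $h_2(x+iy)=\max(0,1-\alpha|y|)$ both lie in $AC(\sigma)$, and for $\alpha$ large enough $h_1h_2=h$ on all of $\sigma$, because the part of $\sigma\setminus s_1$ sitting over the support $[\delta/2,1-\delta/2]$ of $k_\delta$ is a compact set at positive distance from the real axis (this is exactly the role played by your quadrilateral $Q$, and, as in your argument, it is the only place where the vanishing of $k_\delta$ near the endpoints and the properness of the representation are used). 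Your proof shares the key mechanism --- multiply a planar extension of $k_\delta$ by a cutoff in $AC(\sigma)$ that kills everything off $s_1$ --- but you interpose a one-variable polynomial approximation $p_n$ of $k_\delta$, which then forces the LG-norm bookkeeping of your third step. That detour is unnecessary: with your smooth bump $\psi$ (equal to $1$ on $J\supseteq[\delta/2,1-\delta/2]$ and supported in $Q$) the product $\psi(x,y)\,k_\delta(x)$ already equals $h$ exactly and lies in $AC(\sigma)$ by the algebra property, with no limiting argument needed. Two minor points if you keep your version: the appeal to Corollary~\ref{LG-decomp} is not strictly licit since $\bigcup_{k\ge2}s_k$ need not be connected, though the inequality you use follows at once from the definition of $\norm{\cdot}_{LG(\sigma)}$; and you should state explicitly that the shrunken $Q$ can be chosen with $\overline{Q}\cap\bigl(\bigcup_{k\ge2}s_k\bigr)=\emptyset$, which holds because that union is compact and meets $s_1$ only at its endpoints, while $J$ stays away from them.
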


\begin{proof} Suppose that $\alpha >0$. By \cite[Proposition~4.4]{AD1}, the functions $h_1(x+iy) = k_\delta(x)$ and $h_2(x+iy) = \max(0,1-\alpha|y|)$ are in $AC(\sigma)$ and hence $h_1 h_2 \in AC(\sigma)$. But if $\alpha$ is sufficiently large, then $h_1 h_2 = k_\delta$ and so we are done.
 
\end{proof}

\begin{theorem}\label{AC-add-edge}
Suppose that $\sigma \in \LG$ and that $s$ is a line segment which intersects $\sigma$ at one or both of its endpoints (and nowhere else). Let
 $\sigma_1 = \sigma \cup s \in \LG$. Suppose that $f \in BV(\sigma_1)$. Then $f \in AC(\sigma_1)$ if and only if
\begin{enumerate}
 \item $f|\sigma \in AC(\sigma)$, and
 \item $f|s \in AC(s)$.
\end{enumerate}
\end{theorem}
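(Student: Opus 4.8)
The \emph{only if} direction is immediate: restriction $f\mapsto f|\sigma$ maps $\mathbb{P}_2(\sigma_1)$ onto $\mathbb{P}_2(\sigma)$ and, by Corollary~\ref{LG-decomp} together with Theorem~\ref{LG-norm}, is bounded from $BV(\sigma_1)$ to $BV(\sigma)$; since $AC(\sigma)$ is the $BV(\sigma)$-closure of $\mathbb{P}_2(\sigma)$, restriction carries $AC(\sigma_1)$ into $AC(\sigma)$, and the same argument works for the restriction to $s$.

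For the converse I would first use the affine invariance from Section~\ref{BV-AC} to assume $s=\ls[0,1]$. Write $F=\sigma\cap s$ (one or both endpoints of $s$), and for a function $u$ defined on a subset of $\sigma_1$ let $u^{\circ}$ be the function on $\sigma_1$ equal to $u$ there and to $0$ elsewhere. First I would reduce to the case $f|_F\equiv 0$: choosing $L\in\mathbb{P}_2$ that agrees with $f$ at each point of $F$ (a constant if $|F|=1$, otherwise a function $a+bx+cy$), we have $L|_{\sigma_1}\in\mathbb{P}_2(\sigma_1)\subseteq AC(\sigma_1)$ with $L|\sigma\in AC(\sigma)$ and $L|s\in AC(s)$, so $f$ may be replaced by $f-L$. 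Assuming now $f|_F\equiv 0$, the functions $(f|\sigma)^{\circ}$ and $(f|s)^{\circ}$ are continuous, lie in $BV(\sigma_1)$ (Corollary~\ref{LG-decomp}), and satisfy $f=(f|\sigma)^{\circ}+(f|s)^{\circ}$. So it suffices to show both summands lie in $AC(\sigma_1)$.

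The crux is the claim that \emph{if $P\in\mathbb{P}_2$ vanishes on $F$, then $(P|\sigma)^{\circ}\in AC(\sigma_1)$ and $(P|s)^{\circ}\in AC(\sigma_1)$}. Granting this, pick polynomials $p_n\to f|\sigma$ in $BV(\sigma)$ and subtract from each the affine interpolant of its values on $F$ (these tend to $0$ in $BV(\sigma)$ since $f|_F\equiv 0$), so that $p_n|_F\equiv 0$; then $\norm{(p_n|\sigma)^{\circ}-(f|\sigma)^{\circ}}_{LG(\sigma_1)}=\norm{p_n-f|\sigma}_{LG(\sigma)}\to 0$ by Corollary~\ref{LG-decomp} and Theorem~\ref{LG-norm}. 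Choosing similarly polynomials $q_n$ in the parameter of $s$ with $q_n|_F\equiv 0$ and $(q_n|s)^{\circ}\to(f|s)^{\circ}$ in $BV(\sigma_1)$, the claim gives $(p_n|\sigma)^{\circ}+(q_n|s)^{\circ}\in AC(\sigma_1)$; this sequence converges to $(f|\sigma)^{\circ}+(f|s)^{\circ}=f$ in $BV(\sigma_1)$, and $AC(\sigma_1)$ is closed, so $f\in AC(\sigma_1)$.

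It remains to prove the claim, and this is where the real work lies. For large $\beta$ I would build a cutoff $w_\beta\in AC(\sigma_1)$ with $0\le w_\beta\le 1$, $w_\beta\equiv 1$ on $\sigma$, support within a $\beta^{-1}$-neighbourhood of $\sigma$, and $\sup_\beta\norm{w_\beta}_{BV(\sigma_1)}<\infty$, along the lines of Lemma~\ref{pw-linear-h}: take $\mu$ a sum over the component segments of $\sigma$ of functions $\abs{\ell}$ and $\max(0,\ell)$ with $\ell$ affine, so that $\mu$ is a piecewise-affine surrogate for $\mathrm{dist}(\,\cdot\,,\sigma)$ lying in $AC(\sigma_1)$ by \cite[Proposition~4.4]{AD1}, and set $w_\beta=\max\bigl(0,\min(1,\,1-\beta\mu)\bigr)$; the uniform $BV$-bound holds because on each component segment of $\sigma_1$ this is piecewise affine with a number of pieces independent of $\beta$, which controls $\norm{w_\beta}_{LG(\sigma_1)}$ via Theorem~\ref{LG-norm}. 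Then $Pw_\beta\in AC(\sigma_1)$ as a product of elements of $AC(\sigma_1)$, it equals $P$ on $\sigma$, and on $s$ it is supported near $F$ where $P$ is small since $P|_F\equiv 0$; splitting $\norm{\,\cdot\,}_{LG(\sigma_1)}$ over $\sigma$ and $s$ (Corollary~\ref{LG-decomp}) and using that $\norm{P}_\infty$ and the variation of $P$ over the shrinking support tend to $0$ shows $Pw_\beta\to(P|\sigma)^{\circ}$ in $BV(\sigma_1)$, hence $(P|\sigma)^{\circ}\in AC(\sigma_1)$; the statement for $(P|s)^{\circ}$ is identical with a cutoff that is $1$ on $s$ and supported near $s$. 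The delicate points throughout are the construction of these cutoffs and the bookkeeping in the norm estimates; it is precisely because on a linear graph $\norm{\,\cdot\,}_{BV}$ is governed edge-by-edge (Theorem~\ref{LG-norm}) that a cutoff can be concentrated near a vertex without its norm exploding — and the failure of such localisation for general compact sets is exactly why the naive implication ``absolutely continuous on each piece $\Rightarrow$ absolutely continuous on the union'' is false in general. (When $|F|=1$ the far endpoint of $s$ is a free vertex of $\sigma_1$, so the relevant polynomials need only vanish at one point; this changes nothing essential.)
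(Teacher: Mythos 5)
Your overall strategy is sound and close in spirit to the paper's, but you take a symmetric route where the paper takes an asymmetric one, and the symmetry costs you. The paper does not decompose $f$ into $(f|\sigma)^{\circ}+(f|s)^{\circ}$; instead, after normalising $s=[0,1]$, it extends $f|s$ to all of $\sigma_1$ by the rule $f_s(\vecz)=f(\min(\max(\Re\vecz,0),1))$, which lies in $AC(\sigma_1)$ directly by \cite[Proposition~4.4]{AD1} with no cutoff needed. Subtracting $f_s$ reduces at once to the case $g|s=0$, $g|\sigma\in AC(\sigma)$, and then only \emph{one} cutoff is required: the function $1-h$ with $h$ from Lemma~\ref{pw-linear-h} (applied to $\sigma_1$ with distinguished segment $s$), used to damp an approximating polynomial $p$ for $g|\sigma$ on the interior of $s$ while leaving it unchanged on $\sigma$. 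Your claim for $(P|\sigma)^{\circ}$ is exactly this step, and your norm bookkeeping for it matches the paper's estimate of $\var(p(1-h),[0,1])$.

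Two points in your write-up need repair. First, your construction of $w_\beta$ via ``$\mu$ a sum over the component segments of $\sigma$'' of segment-distance surrogates does not do what you want: a sum of nonnegative terms, each vanishing on one segment, vanishes on the \emph{intersection} of the segments, not on their union, so $w_\beta$ as defined is not $1$ on $\sigma$. This is easily fixed, and in fact unnecessary: the only values of $w_\beta$ that matter are its values on $\sigma_1$, and there the required cutoff is precisely $1-h$ with $h$ as in Lemma~\ref{pw-linear-h}. Second, and more seriously, the cutoff needed for $(P|s)^{\circ}$ is \emph{not} ``identical'': it must equal $1$ on all of $s$ and decay to $0$ along \emph{every} segment of $\sigma$ emanating from the junction set $F$, of which there may be several pointing in different directions. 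Lemma~\ref{pw-linear-h} does not supply this; you would need a genuinely two-dimensional bump at each point of $F$ (e.g.\ a product of coordinate cutoffs $\max(0,1-\alpha|x-x_0|)\cdot\max(0,1-\alpha|y-y_0|)$, which is in $AC(\sigma_1)$ by \cite[Proposition~4.4]{AD1} and the algebra property), combined with $h$ so as to be identically $1$ on $s$. This can be carried through, but it is the hardest part of your argument and is the part you wave at; the paper's use of $f_s$ sidesteps it entirely. I recommend either adopting the $f_s$ reduction or writing out the second cutoff in full.
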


\begin{proof}
Restricting an absolutely continuous function to a smaller compact set preserves absolute continuity (\cite[Lemma~4.5]{AD1}) so it is clear that if  $f \in AC(\sigma_1)$ then (1) and (2)  hold.

Suppose conversely that $f$ satisfies (1) and (2).  Recalling that affine transformations preserve absolute continuity, we may apply an affine transformation to the plane to reduce to the case that $s = [0,1]$.  Define $f_s: \sigma_1 \to \C$ by
  \[ f_s(\vecz) = \begin{cases}
                   f(0),        & \text{if $\Re \vecz < 0$}, \\
                   f(\Re \vecz), & \text{if $0 \le \Re \vecz \le 1$}, \\
                   f(1),        & \text{if $\Re \vecz > 1$}. \\
                  \end{cases} \]
By \cite[Proposition~4.4]{AD1}, $f_s \in AC(\sigma_1)$ since it is the extension to a subset of the plane of a function which is absolutely continuous on an  interval in $\R$. Let $g = f - f_s \in BV(\sigma_1)$. Note that (since $f_s|\sigma \in AC(\sigma)$)
\begin{itemize}
 \item[$\bullet$] $g|\sigma \in AC(\sigma)$, and
 \item[$\bullet$] $g|s = 0$.
\end{itemize}
Fix $\epsilon > 0$. Using Theorem~\ref{LG-norm}, it suffices now to show that there exists $q \in AC(\sigma_1)$ with $\norm{g-q}_{LG(\sigma_1)} < \epsilon$. This will imply that $g \in AC(\sigma_1)$ and hence that $f \in AC(\sigma_1)$ too.

By definition there exists a polynomial $p \in {\mathcal{P}}_2$ such that $\norm{g - p}_{LG(\sigma)} \le \frac{\epsilon}{5}$.
Suppose first that both $0$ and $1$ are elements of $\sigma$. Then
  \[ |p(0)| = |p(0) - g(0)| \le \norm{p-g}_{\infty} \le \norm{p-g}_{LG(\sigma)} < \frac{\epsilon}{5}, \]
with a similar bound on $|p(1)|$.
Since $p$ is absolutely continuous on $[0,1]$, there exists $\delta > 0$ such that $\var(p,[0,\delta]) < \frac{\epsilon}{5}$ and $\var(p,[1-\delta,1]) < \frac{\epsilon}{5}$. It follows that $|p(\vecz)| < \frac{2\epsilon}{5}$ for all $\vecz \in [0,\delta] \cup [1-\delta,1]$. Use Lemma~\ref{pw-linear-h} to define $h \in AC(\sigma_1)$ which is supported on $[\delta/2,1-\delta/2]$, and let $q = p(1-h)$. Then certainly $q \in AC(\sigma_1)$. Furthermore, noting that $q = p$ on $\sigma$, by Corollary~\ref{LG-decomp}
  \[ \norm{g-q}_{LG(\sigma_1)} \le \norm{g-q}_{LG(\sigma)} + \norm{g-q}_{LG(s)}
   = \norm{g-p}_{LG(\sigma)} + \norm{g-q}_{BV(s)}. \]
Now, using \cite[Proposition 3.7]{AD1}
  \begin{align*}
  \norm{g-q}_{BV(s)} &= \norm{q}_{BV(s)} \\
      &= \norm{q}_{\infty,s} + \var(p(1-h),[0,1]) \\
      &< \frac{2\epsilon}{5} + \var(p(1-h),[0,\delta]) \\
      & \qquad + \var(p(1-h),[\delta,1-\delta]) + \var(p(1-h),[1-\delta,1])\\
      &\le \frac{2\epsilon}{5} + \norm{1-h}_\infty \var(p,[0,\delta]) + 0 + \norm{1-h}_\infty \var(p,[1-\delta,1]) \\
      &< \frac{4\epsilon}{5}.
  \end{align*}
Thus $\norm{g-q}_{LG(\sigma_1)} < \frac{\epsilon}{5}+\frac{4\epsilon}{5} = \epsilon$ .

The case where $\sigma$ and $s$ meet at just one point, say $0$, is essentially the same, except that in this case one uses a function $h$ which is zero on all of $[\delta,1]$ rather than just on $[\delta,1-\delta]$.
 
\end{proof}

We can now complete the proof of Theorem~\ref{main-theorem}.

\begin{proof}
The fact that if $AC(\sigma)$ is isomorphic to $AC(\tau)$ then $\sigma$ is homeomorphic to $\tau$ is just Theorem~2.6 of \cite{DL1}.

Suppose conversely that $\sigma$ is homeomorphic to $\tau$. As in the proof of Theorem~\ref{BV-isom}, fix corresponding edge-by-edge decompositions $\{\sigma_j\}_{j=1}^m$ and $\{\tau_j\}_{j=1}^m$ of $\sigma$ and $\tau$, and let $\phi$ be a homeomorphism from $\sigma$ to $\tau$ which is affine on each of the component line segments.
Let $\Phi: BV(\sigma) \to BV(\tau)$ be the corresponding Banach algebra isomorphism $\Phi(f)(\vecz) = f(\phi^{-1}(\vecz))$.

Suppose that $f \in AC(\sigma)$. Then $f|\sigma_1 \in AC(\sigma_1)$ and since $\phi^{-1}$ is an affine map on $\tau_1$, it is clear that $f \circ \phi^{-1}$ is absolutely continuous on $\tau_1$. Repeated use of Theorem~\ref{AC-add-edge} now allows one to deduce that $f \circ \phi^{-1}|\sigma_j \in AC(\tau_j)$ for each $j$, and in particular, that $\Phi(f) \in AC(\tau)$.

Since $\Phi^{-1}(g)(\vecz) = g(\phi(z))$, the same proof shows that the image of $AC(\sigma)$ under $\Phi$ is all of $AC(\tau)$ and hence that $AC(\sigma)$ is isomorphic to $AC(\tau)$.
 
\end{proof}

Theorem~\ref{main-theorem} can be extended easily to the class of possibly disconnected unions of closed line segments in the plane. Let $L_m$ denote the collection of sets which can be written as a union of $m$ such line segments. Each such collection has finitely many  equivalence classes of homeomorphic sets. In particular this means that, up to isomorphism, there are only countably many spaces $AC(\sigma)$ with $\sigma \in \LG$.

\section{A final example}\label{fin-ex}

One might naturally ask the extent to which the one-dimensional structure of sets in $\LG$ is vital in the results above. Preliminary work indicates that the linear structure can be relaxed to some extent. The following example gives an indication of some of the restrictions which will be required in generalizing these results.

\begin{example}
Let $\sigma = [0,1] = \{(t,0) \,:\, 0 \le t \le 1 \}$ and let $\tau = \{(0,0) \} \cup \bigl\{\bigl(t,t \sin\frac{1}{t} \bigr) \,:\, 0 < t \le 1 \bigr\}$. Then $\sigma$ and $\tau$ are homeomorphic curves. Suppose that $AC(\sigma)$ is isomorphic to $AC(\tau)$ via an isomorphism $\Phi$. Then there exists a homeomorphism $h: \sigma \to \tau$ such that $\Phi(f)(\vecx) = f(h^{-1}(\vecx))$.

The function $g: \tau \to \C$,  $g(x,y) = y$ is in $\mathbb{P}_2(\tau)$ and hence $g \in AC(\tau)$. Let $f = \Phi^{-1}(g) = f \circ h$.
For $j=1,2,3,\dots$ let $t_j = 2/((2j-1)\pi)$, and let $\vecx_j = \bigl(t_j,t_j \sin \frac{1}{t_j} \bigr) \in \tau$. Let $\vecz_j = h^{-1}(\vecx_j)$ so that $f(\vecz_j) = (-1)^j t_j$. Since $h$ is a homeomorphism it is clear that the sequence $\{\vecz_j\}$ must be monotone in $[0,1]$. Since $\sigma$ is a line segment in $\R$ we can use the classical definition of variation and so for all $n$
  \[
  \var(f,\sigma) \ge \sum_{j=2}^n | f(\vecz_j) - f(\vecz_{j-1})|
     = \frac{2}{\pi} \sum_{j=2}^n \Bigl| \frac {1}{2j-1} + \frac{1}{2j-3} \Bigr|
  \]
and hence $f$ is not even of bounded variation on $\sigma$.

It follows that no such isomorphism $\Phi$ can exist. That is, $\sigma$ and $\tau$ are homeomorphic curves for which the function spaces $AC(\sigma)$ and $AC(\tau)$ are not isomorphic.
\end{example}

\medskip

\textbf{Acknowledgements}:
The work of the first author was financially supported
by the Ministry of Higher Education and Scientific Research
of Iraq.

\end{document}